\newtheorem{thm}{Theorem}[section]
\newtheorem{rem}[thm]{Remark}
\newtheorem{lem}[thm]{Lemma}
\def\NZQ{\Bbb}
\def\RR{{\NZQ R}}
\def\CC{{\NZQ C}}
\def\PP{{\NZQ P}}
\def\SS{{\NZQ S}}
\def\TT{{\NZQ T}}
\def\ml{\mathcal{C}}
\def\ml1{\mathcal{C}^1}
\def\mlb1{\mathcal{C}_{b}^{1}}
\def\frk{\frak}
\def\Phi{{\frk n}}
\def\sign{{\rm sign}}
\def\dim{{\rm dim}}
\def\rank{{\rm rank}}
\def\A{{\mathcal A}}
\def\B{{\mathcal B}}
\newcommand{\R}[0]{\mathbb{R}}
\newcommand{\C}[0]{\mathbb{C}}
\newcommand{\Zt}[0]{\mathcal Z}
\title[Discriminantal arrangemet and hypersurfaces in Grassmannian]{Discriminantal Arrangement, $3\times3$ Minors of Pl\"{u}cker Matrix and hypersurfaces in Grassmannian $Gr(3,n)$.}
\author{S. Sawada}
\author{ S. Settepanella}
 \author{S. Yamagata}
\address{%
Department of Mathematics,
Hokkaido University, Japan.}
\email{b.lemon329@gmail.com}
\email{s.settepanella@math.sci.hokudai.ac.jp}
\email{so.yamagata.math@gmail.com}
\thanks{The second named author was supported by JSPS Kakenhi Grant Number 26610001.}
\subjclass{52C35 05B35 14M15}
\keywords{discriminantal arrangements, Intersection lattice, Pl\"{u}cker relations, Grassmannian }
\begin{document}

\maketitle


\begin{abstract}
We show that points in specific degree 2 hypersurfaces in the Grassmannian $Gr(3, n)$ correspond to generic arrangements of $n$ hyperplanes in $\C^3$ with associated discriminantal arrangement having intersections of multiplicity three in codimension two.

\end{abstract}

\section{Introduction}

In 1989 Manin and Schechtman (cf.\cite{man}) 
considered a family of arrangements of hyperplanes 
generalizing classical braid arrangements which they called 
the {\it discriminantal arrangements} (cf. \cite{man} p.209). 
Such  an arrangement $\B(n,k), n,k \in {\bf N}$ 
for $k \ge 2$ depends on a choice
$H^0_1,...,H^0_n$ of collection of hyperplanes in 
general position in $\CC^k$. It consists of 
parallel translates of $H_1^{t_1},...,H_n^{t_n}, (t_1,...,t_n) \in \CC^n$ 
which fail to form a generic arrangement in $\CC^k$.
$\B(n,k)$ can be viewed as a  
generalization of the pure braid group arrangement (cf. \cite{OT}) with which  
$\B(n,1)$ coincides. 
These arrangements have several beautiful 
relations with diverse problems including combinatorics (cf. \cite{man}, \cite{athana}, \cite{BB} and also 
\cite{Crapo}, which is an earlier appearance of discriminantal
arrangmements), Zamolodchikov equation with its relation to 
 higher category theory (cf. Kapranov-Voevodsky \cite{Kap}),
and vanishing of cohomology of bundles on toric varieties
(cf. \cite{Perling}). 
Paper \cite{man} concerns with arrangements
$\B(n,k)$ which combinatorics is constant on a Zariski open set $\Zt$ in the 
space of generic arrangements $H^0_i,i=1,...,n$ 
but does not describe the set $\Zt$ explicitly.
In 1994 (see \cite{falk}) Falk showed
that, contrary to what was frequently 
stated (see for instance \cite{Or}, sect. 8, \cite{OT} or \cite{RL}), the combinatorial type of $\B(n,k)$ depends on 
the arrangement $\A$ of hyperplanes 
$H^0_i,i=1,...,n$  by providing an example of $\A$ 
for  which the corresponding discriminantal arrangement has combinatorial 
type distinct from the one which occurs when $\A$
varies within the Zariski open set $\Zt$. In 1997 Bayer and Brandt ( cf. \cite{BB} ) called the arrangements $\A$ in $\Zt$ \textit{very generic} and conjectured the full description of intersection lattice of $\B(n,k)$ if $\A \in \Zt$. In 1999 Athanasiadis proved their conjecture (cf. \cite{athana}). In particular, for the case of arrangement $\A$ in $\R^k$, endowed with standard metric, he introduced a degree $m$ polynomial $p_{\mathbb{T}}(a_{ij})$ ( section 1 in \cite{athana} and subsection \ref{plucker} in this paper ) in the indeterminates  $(a_{ij})$ where $\alpha_i=(a_{ij})$ is the normal vector to hyperplane $H_i^0$, $i \in L_h \in \mathbb{T}$, $L_h$ is a subset of cardinality $k+1$ of $\{1,\ldots ,n\}$ and $\mathbb{T}$ is a set of cardinality $m$. Since null space of this polynomial corresponds to intersection of hyperplanes in $\B(n,k)$, he provided, in case of very generic arrangements, a full description of sets $\mathbb{T}$ such that $p_{\mathbb{T}}(a_{ij})=0$ (cf. Theorem 3.2 in \cite{athana}). In particular all codimension $2$ intersections of hyperplanes in $\B(n,k)$ have multiplicity $2$ or $k+2$ if $\A$ is very generic.

More recently, in 2016 ( cf. \cite{sette}), Libgober and second author gave a sufficient geometric condition for an arrangement $\A$ not to be very generic. In particular they gave a necessary and sufficient condition for multiplicity $3$ codimension $2$ intersections of hyperplanes in $\B(n,k)$ to appear ( Theorem 3.8 \cite{sette} and Theorem \ref{thm:sette} in this paper).

The purpose of this short note is double. From one side to rewrite the result obtained in \cite{sette} in terms of the polynomial $p_{\mathbb{T}}(a_{ij})$ introduced by Athanasiadis and prove that, in case of non very generic arrangements, if $\mathbb{T}$ is a set of cardinality $3$ such that  $p_{\mathbb{T}}(a_{ij})=0$, then polynomial $p_{\mathbb{T}}(a_{ij})$ has a simpler polynomial expression $\widetilde{p}_\mathbb{T}(a_{ij})$.

On the other side to show, by mean of a more algebraic point of view, that non very generic arrangements $\A$ of cardinality $n$ in $\CC^3$ are points in a well defined degree $2$ hypersurface in the projective Grassmannian $Gr(3,n)$. Indeed the space of generic arrangements of $n$ lines in $(\PP^2)^n$ is Zariski open set $U$ in the space of all arrangements of $n$ lines in $(\PP^2)^n$. On the other hand in $Gr(3,n)$ there is open set $U'$ consisting of $3$-spaces intersecting each coordinate hyperplane transversally (i.e. having dimension of intersection equal $2$). One has also one set $\tilde U$ in $Hom(\CC^3,\CC^n)$ consisting of embeddings with image transversal to coordinate hyperplanes and $\tilde U/GL(3)=U'$ and $\tilde U/(\CC^*)^n=U$. 
Hence generic arrangements can be regarded as points in $Gr(3,n)$. 

The content of paper is the following.

In section \ref{pre}  we recall definition of discriminantal arrangement from \cite{man}, basic results in \cite{sette}, definition of $p_{\mathbb{T}}(a_{ij})$ in \cite{athana} and basic notions on Grassmannian ( cf. \cite{harris} ). In section \ref{thmU63} we give a full description of main example $\B(6,3)$ of $6$ hyperplanes in $\RR^3$. Section \ref{thmUnk} contains the result stating  equivalence of polynomial $p_{\mathbb{T}}(a_{ij})$ with its reduced form $\widetilde{p}_{\mathbb{T}}(a_{ij})$ (cf. Theorem \ref{thm:Bnk}). The last section contain the last result of this paper (cf. Theorem \ref{thm:gr}) describing a family of hypersurfaces in projective Grassmannian $Gr(3,n)$ in terms of non very generic arrangements $\A$ in $\CC^3$. Notice that in Sections \ref{thmU63} and  \ref{thmUnk}  $\A$ is an arrangement in $\R^k$ while in Sections \ref{grassmannian} $\A$ is an arrangement in $\C^k$.

Finally, authors want to thank A. Libgober and an anonymous referee for very useful comments.

\section{Preliminaries}\label{pre}

\subsection{Discriminantal arrangement}\label{discarr}

Let $H^0_i, i=1,...,n$ be a generic arrangement in $\CC^k, k<n$ i.e. 
a collection of hyperplanes such that $\dim \bigcap_{i \in K,
 \mid K\mid=k} H_i^0=0$. 
Space of parallel translates $\SS(H_1^0,...,H_n^0)$ (or simply $\SS$ when 
dependence on $H_i^0$ is clear or not essential)
is the space of $n$-tuples
$H_1,...,H_n$ such that either $H_i \cap H_i^0=\emptyset$ or 
$H_i=H_i^0$ for any $i=1,...,n$.
One can identify $\SS$ with $n$-dimensional affine space $\CC^n$ in
such a way that $(H^0_1,...,H^0_n)$ corresponds to the origin.  In particular, an ordering of hyperplanes in $\A$ determines the coordinate system in $\SS$ (see \cite{sette}).

We will use the compactification of $\CC^k$ viewing it 
as $\PP^k\setminus H_{\infty}$ endowed with collection of hyperplanes
$\bar H^0_i$ which are projective closures of affine hyperplanes
$H^0_i$. Condition of genericity is equivalent to $\bigcup_i H^0_i$ 
being a normal crossing divisor in $\PP^k$.

For a generic arrangement $\A$ in $\CC^k$ 
formed by hyperplanes $H_i, i=1,...,n$
{\it the trace at infinity} (denoted by $\A_{\infty}$)  is the arrangement 
formed by hyperplanes 
$H_{\infty,i}=\bar H^0_i\cap H_{\infty}$.  
  
The trace  $\mathcal{A}_{\infty}$ of an arrangement $\mathcal{A}$ determines the space of parallel translates $\mathbb{S}$ (as a subspace in the space of $n$-tuples of hyperplanes in $\mathbb{P}^k$). For a $t$-tuple $H_{i_1}, \dots, H_{i_t}$ ($t \geq 1$) of hyperplanes in $\mathcal{A}$, recall that the arrangement which is obtained by intersections of hyperplanes $H \in \mathcal{A}, H \neq H_{i_s}$, $s = 1, \dots, t$ with $H_{i_1} \cap \dots \cap H_{i_t}$, is called the \textit{restriction} of $\mathcal{A}$ to $H_{i_1} \cap \dots \cap H_{i_t}$.

For a generic arrangement  $\mathcal{A}_{\infty}$, consider the closed subset of $\mathbb{S}$ formed by those collections which fail to form a generic arrangement. This subset is a union of hyperplanes with each hyperplane $D_L$ corresponding to a subset $L = \{ i_1, \dots, i_{k+1} \} \subset  [n]:= \{ 1, \dots, n \}$ and consisting of $n$-tuples of translates of hyperplanes $H_1^0, \dots, H_n^0$ in which translates of $H_{i_1}^0, \dots, H_{i_{k+1}}^0$ fail to form a generic arrangement. The arrangement $\mathcal{B}(n, k, \mathcal{A}_{\infty})$ of hyperplanes $D_L$ is called $the$ $discriminantal$ $arrangement$ and has been introduced by Mannin and Schectman (see \cite{man}).  Notice that since combinatorics of discriminantal arrangement depends on the arrangement $\mathcal{A}_{\infty}$ rather than $\A$, we denote it by $\mathcal{B}(n, k, \mathcal{A}_{\infty})$ following notation in \cite{sette}.

\subsection{Good 3s-partitions}
Given $s \geq 2$ and $n \geq 3s$, consider the set $\mathbb{T} = \{ L_1, L_2, L_3 \}$, with $L_i$ subsets of $[n]$ such that $|L_i| = 2s$, $|L_i \cap L_j| = s$ ($i \neq j$), $L_1 \cap L_2 \cap L_3 = \emptyset$ (in particular $|\bigcup L_i| = 3s$) with a choice $L_1 = \{ i_1, \dots, i_{2s}\}, L_2 = \{ i_{s+1}, \dots, i_{3s} \}, L_3 = \{ i_1,\dots, i_s, i_{2s+1}, \dots, i_{3s} \}$. We call the set $\mathbb{T} = \{ L_1, L_2, L_3 \}$ \textit{a good $3$s-partition}.

Given a generic arrangement $\mathcal{A} $ in $\mathbb{C}^k$, subsets $L_i$ define hyperplanes $D_{L_i}$ in the discriminantal arrangement $\mathcal{B}(n, k, \mathcal{A}_{\infty})$. In the rest of the paper we will always use $D_{L}$ to denote hyperplanes in discriminantal arrangement. 
With above notations the following lemma holds.

\begin{lem}{(Lemma 3.1 \cite{sette})}\label{lem:sette}  Let $s \geq$ 2, $n = 3s$, $k = 2s-1$ and $\mathcal{A}$ be a generic arrangement of $n$ hyperplanes in $\mathbb{C}^k$. Given a good  
$3s$-partition $\mathbb{T} = \{ L_1, L_2, L_3 \}$ of [$n$] = [$3s$], consider the triple of codimension $s$ subspaces $H_{\infty, i, j} = \bigcap_{t \in L_i \cap L_j} H_{\infty,t}$ of the hyperplane at infinity $H_{\infty}$. Then $H_{\infty, i, j}$ span a proper subspace in $H_{\infty}$ if and only if the codimension of $D_{L_1} \cap D_{L_2} \cap D_{L_3}$ is 2. 
\end{lem}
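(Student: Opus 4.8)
The plan is to translate everything into linear algebra over the space of normal vectors $\alpha_t \in \mathbb{C}^k$ and then to dualize at infinity. First I would record the equation of a discriminantal hyperplane. Writing $H_t^0 = \{\alpha_t\cdot x = c_t\}$ and using the coordinate $\tau_t$ on $\mathbb{S}\cong\mathbb{C}^n$ for the translate $\{\alpha_t\cdot x = c_t+\tau_t\}$, concurrency of the $k+1$ planes indexed by $L$ is the vanishing of a bordered $(k+1)\times(k+1)$ determinant; expanding along the last column yields $D_L:\ \sum_{t\in L}(v_L)_t\tau_t = d_L$, where $(v_L)_t = \pm\det(\alpha_s : s\in L\setminus\{t\})$ and $d_L = -\sum_{t\in L}(v_L)_t c_t$. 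The key remark is that, by Cramer's rule, $v_L|_L$ is exactly the unique (up to scale) linear relation among $\{\alpha_t : t\in L\}$, i.e. $\sum_{t\in L}(v_L)_t\alpha_t = 0$, and genericity makes all these maximal minors nonzero.

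Next I would reduce the codimension statement to a rank computation. Set $A = L_1\cap L_3$, $B = L_1\cap L_2$, $C = L_2\cap L_3$, which are disjoint of size $s$ with $L_1 = A\cup B$, $L_2 = B\cup C$, $L_3 = A\cup C$. The normals $v_1,v_2,v_3$ of $D_{L_1},D_{L_2},D_{L_3}$ then have pairwise-overlapping supports. Since $d_i$ depends on $v_i$ through the single linear functional $v\mapsto -\sum_t c_t v_t$, any relation $\sum_i\lambda_i v_i = 0$ forces $\sum_i\lambda_i d_i = 0$, so the affine system is automatically consistent and $\codim(D_{L_1}\cap D_{L_2}\cap D_{L_3}) = \rank\{v_1,v_2,v_3\}$. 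Because $v_1|_A\neq 0$ while $v_2|_A = 0$ (and symmetrically for the other blocks) the rank is always at least $2$, so codimension $2$ is equivalent to the existence of a nontrivial relation $\lambda_1 v_1+\lambda_2 v_2+\lambda_3 v_3 = 0$.

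I would then decouple this relation over the three blocks. Restricting to $A$, $B$, $C$ gives three independent equations, and applying $(\cdot)_t\mapsto\sum(\cdot)_t\alpha_t$ — injective on each block since $\{\alpha_t\}$ is there independent — converts them, via the Cramer relations, into the single chain $\lambda_1 a = \lambda_2 b = -\lambda_3 c$ in $\mathbb{C}^k$, where $a := \sum_A(v_1)_t\alpha_t\in V_A\cap V_B$, $b := \sum_B(v_2)_t\alpha_t\in V_B\cap V_C$, $c := \sum_A(v_3)_t\alpha_t\in V_A\cap V_C$, with $V_S := \span\{\alpha_t : t\in S\}$. Genericity gives more: the $|L_i| = k+1$ vectors are in general position and hence span $\mathbb{C}^k$, so $\dim(V_A+V_B)=k$ and each pairwise intersection is exactly a line, necessarily $\langle a\rangle,\langle b\rangle,\langle c\rangle$ with $a,b,c\neq 0$. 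Consequently a nontrivial $(\lambda_i)$ exists if and only if $a,b,c$ are pairwise proportional, equivalently $V_A\cap V_B\cap V_C\neq 0$.

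Finally I would dualize. Since $H_{\infty,t} = \mathbb{P}(\alpha_t^\perp)$, one gets $H_{\infty,1,2} = \mathbb{P}(V_B^\perp)$, $H_{\infty,1,3} = \mathbb{P}(V_A^\perp)$, $H_{\infty,2,3} = \mathbb{P}(V_C^\perp)$, so the three span a proper subspace of $H_\infty$ iff $V_A^\perp+V_B^\perp+V_C^\perp\neq\mathbb{C}^k$ iff $V_A\cap V_B\cap V_C\neq 0$. Chaining the equivalences proves the lemma. I expect the main obstacle to be the middle step: bookkeeping supports and signs so that the relation among the three discriminantal normals decouples into $\lambda_1 a = \lambda_2 b = -\lambda_3 c$, and observing that general position forces the pairwise intersections $V_A\cap V_B$, etc., to be honest lines spanned by $a,b,c$ — this is precisely what upgrades ``a relation exists'' to the clean geometric statement ``$a,b,c$ proportional'', equivalently a nonzero triple intersection.
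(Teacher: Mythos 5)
Your proof is correct, but note that the paper itself does not prove this statement: it is quoted verbatim from \cite{sette} (Lemma 3.1 there), so there is no in-paper argument to match yours against. What the paper does supply is two fragments that your argument subsumes. First, the reduction $\codim(D_{L_1}\cap D_{L_2}\cap D_{L_3})=2 \Leftrightarrow \rank A_{\mathbb{T}}(\mathcal{A}_\infty)=2$, which the paper treats as immediate (Section 4.2) and which you justify more carefully via the consistency observation $\sum_i\lambda_i v_i=0\Rightarrow\sum_i\lambda_i d_i=0$. Second, the unnumbered lemma in Section 4.2 identifying the spaces $U_{i,j}^{\perp}$ (your $V_A^\perp, V_B^\perp, V_C^\perp$) with the directions of the flats at infinity --- but the paper proves that lemma \emph{by invoking} Lemma \ref{lem:sette}, whereas you close the loop independently. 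The genuinely new content of your write-up is the middle step: the blockwise decoupling of a relation $\lambda_1v_1+\lambda_2v_2+\lambda_3v_3=0$ over the partition $A,B,C$, the use of the Cramer identity $\sum_{t\in L}(v_L)_t\alpha_t=0$ to land the block-sums in the pairwise intersections $V_A\cap V_B$, etc., and the dimension count $\dim(V_A\cap V_B)=2s-k=1$ forcing these to be the lines $\langle a\rangle,\langle b\rangle,\langle c\rangle$. This is exactly the bridge between the combinatorial rank condition and the geometric collinearity/dependency condition that the paper outsources to \cite{sette} (and illustrates only for $s=2$, $k=3$ via the cross products $\alpha_i\times\alpha_{i+1}$ in Section 3, where $\alpha_1\times\alpha_2$ spans $V_{\{1,2\}}^\perp$). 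Your argument is uniform in $s$, self-contained, and also transparently explains why the rank of $\{v_1,v_2,v_3\}$ can never drop to $1$; the only caveat is the sign bookkeeping in the chain $\lambda_1a=\lambda_2b=-\lambda_3c$, which, as you note, is immaterial since pairwise proportionality is all that is used.
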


In \cite{sette} authors define a notion of dependency for a generic arrangement $\mathcal{A}_\infty = \{ W_{\infty, 1}, \dots , W_{\infty, 3s} \}$ in $\mathbb{P}^{2s-2}, s \geq 2$ based on Lemma \ref{lem:sette} as follows. If there exists a partition $I_{1}, I_{2}$ and $I_{3}$ of [$3s$] such that $P_i = \bigcap_{t\in I_i} W_{\infty, t}$ span a proper subspace in $\mathbb{P}^{2s-2}$, then $\mathcal{A}_\infty$ is called \textit{dependent}. Remark that if $\{ L_1, L_2, L_3 \}$ is a good $3s$-partition and we set $I_1 = L_1 \cap L_2$, $I_2 = L_1 \cap L_3$, $I_3 = L_2 \cap L_3$ then the the assumption of Lemma \ref{lem:sette} is that the trace at infinity $\mathcal{A}_\infty$ of $\A$ is dependent  and the following theorem holds .

\begin{thm}(Theorem 3.8 \cite{sette}) \label{thm:sette}
Let $\mathcal{A}$ be a generic arrangement of $n$ hyperplanes in $\mathbb{C}^k$ and $\mathcal{A}_\infty$ the trace at infinity of $\mathcal{A}$. 

1. The arrangement $\mathcal{B}(n,k,\mathcal{A}_\infty)$ has $\binom{n}{k+2}$ codimension 2 strata of multiplicity $k+2$.

2. 
There is one to one correspondence between 

(a) restriction arrangements of $\mathcal{A}_\infty$ which are dependent, and

(b) triples of hyperplanes in $\mathcal{B}(n,k,\mathcal{A}_\infty)$ for which the codimension of their intersection is equal to 2.

3. There are no codimension 2 strata having multiplicity 4 unless $k=3$. All codimension 2 strata of $\mathcal{B}(n,k,\mathcal{A}_\infty)$ not 
mentioned  in part 1, have multiplicity either 2 or 3.

4. Combinatorial type of $\mathcal{B}(n,2,\mathcal{A}_\infty)$ is independent of $\mathcal{A}$.
\end{thm}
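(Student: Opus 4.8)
The plan is to convert every assertion into a rank computation on the space of translates $\mathbb{S} \cong \mathbb{C}^n$. Writing $\alpha_i = (a_{i1}, \dots, a_{ik})$ for the normal of $H_i^0$, each $D_L$ with $L = \{j_1, \dots, j_{k+1}\}$ is an affine hyperplane whose linear part is
\[
\ell_L = \sum_{j \in L} \pm \det\bigl(\alpha_i : i \in L \setminus \{j\}\bigr)\, t_j,
\]
obtained by expanding the determinant whose rows are $(\alpha_i,\, \beta_i + t_i)$; the codimension of any nonempty intersection $\bigcap_L D_L$ equals the rank of the associated family of linear parts $\ell_L$. For Part 1, fix a $(k+2)$-subset $S$ and the $k+2$ forms $\ell_L$, $L \subset S$. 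Each row of their $(k+2)\times(k+2)$ coefficient matrix is, by Cramer's rule, a linear relation among the normals $\alpha_i$, $i \in S$; as these are $k+2$ generic vectors in $\mathbb{C}^k$ their relation space is exactly $2$-dimensional, so the matrix has rank $2$. Hence $\bigcap_{L \subset S} D_L$ has codimension $2$ and multiplicity $k+2$, and letting $S$ range over all $(k+2)$-subsets produces the $\binom{n}{k+2}$ strata, manifestly independent of $\mathcal{A}$.

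For Part 2 I would reduce to Lemma \ref{lem:sette} by restriction. Given a triple $\{D_{L_1}, D_{L_2}, D_{L_3}\}$ meeting in codimension $2$ and not contained in a single $(k+2)$-family, the three linear parts $\ell_{L_a}$ must be linearly dependent with all coefficients nonzero; tracking which variables $t_j$ occur in each $\ell_{L_a}$ forces $\{L_1, L_2, L_3\}$ to be a good $3s$-partition of its support $U = L_1 \cup L_2 \cup L_3$, with $|U| = 3s$ and $k = 2s-1$ along the corresponding flat. Restricting $\mathcal{A}_\infty$ to the coordinates in $U$ places us exactly in the normalized hypotheses $n' = 3s$, $k' = 2s-1$ of Lemma \ref{lem:sette}, which equates codimension $2$ of the triple with dependency of the restricted trace. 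Running the implication in reverse, every dependent restriction yields such a triple, which is the asserted bijection.

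Parts 3 and 4 then follow. If $m \geq 3$ hyperplanes outside the families of Part 1 meet in codimension $2$, every one of their triples is a codimension-$2$ triple and hence a dependent restriction; assembling $m$ mutually compatible good partitions and counting the dimensions of the spans $P_i$ inside $\mathbb{P}^{2s-2}$ shows $m = 3$ in general, the only surviving coincidence $m = 4$ occurring where this count degenerates, namely at $k = 3$ (so $s = 2$ and $\mathbb{P}^{2s-2} = \mathbb{P}^2$). For Part 4, the equation $k = 2s-1$ has no solution with $s \geq 2$ when $k = 2$, so no dependent restriction, and hence no codimension-$2$ triple, can occur; every codimension-$2$ stratum is therefore of multiplicity $2$ or $k+2 = 4$, and by Parts 1 and 2 these are determined purely combinatorially, independently of $\mathcal{A}$.

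The main obstacle is the dimension count at the heart of Part 3: showing that several dependent restrictions cannot be realized simultaneously except in the multiplicity-$3$ configurations, and isolating $k = 3$ as the unique dimension admitting a multiplicity-$4$ coincidence. This is exactly where the Pl\"{u}cker geometry of $Gr(3, n)$ enters, and it is considerably more delicate than the uniform rank computations underlying Parts 1 and 2.
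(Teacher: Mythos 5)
First, a point of reference: the paper does not prove this statement at all --- it is imported verbatim as Theorem 3.8 of \cite{sette}, so the only proof to compare against is the one in that reference, which proceeds exactly through Lemma \ref{lem:sette} and a combinatorial analysis of the supports of the normal vectors $\alpha_L$. Your Parts 1 and 2 follow that route. Part 1 is essentially right: each row of the $(k+2)\times(k+2)$ matrix of the forms $\ell_L$, $L\subset S$, is (by Cramer) a linear relation among the $k+2$ generic vectors $\alpha_i$, $i\in S$, whose relation space is $2$-dimensional, so the rank is $2$; you should still check nonemptiness of $\bigcap_{L\subset S}D_L$ (these are affine hyperplanes not through the origin), which is done by exhibiting the stratum directly as the set of translates for which the $k+2$ hyperplanes $H_i^{t_i}$, $i\in S$, share a point. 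In Part 2 your support-tracking argument is the right idea but incomplete as stated: a dependency $c_1\alpha_{L_1}+c_2\alpha_{L_2}+c_3\alpha_{L_3}=0$ with all $c_a\neq 0$ forces every index of $L_1\cup L_2\cup L_3$ to lie in \emph{at least} two of the $L_a$, but it does not by itself exclude indices lying in all three, nor does it produce the parity $k+1=2s$ except after restricting along the complement $T$ of the union; this is precisely why the statement is phrased in terms of \emph{restriction} arrangements, and that reduction needs to be written out rather than asserted.

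The genuine gap is Part 3, and you flag it yourself. The claim that $m\geq 4$ hyperplanes $D_L$ outside a single $(k+2)$-family cannot meet in codimension $2$ (except possibly when $k=3$) is the hardest part of the theorem: it requires showing that several good $3s$-partitions cannot simultaneously be dependent along the same codimension-$2$ flat, via a dimension count on the spans of the subspaces $P_i=\bigcap_{t\in I_i}W_{\infty,t}$ in $H_\infty$. Your proposal replaces this with the sentence ``counting the dimensions of the spans $P_i$ \ldots shows $m=3$ in general, the only surviving coincidence \ldots at $k=3$,'' which is a restatement of the conclusion, not an argument; in particular nothing you write explains why $k=3$ (rather than, say, $k=2$, where the part-1 strata themselves have multiplicity $4$) is the exceptional dimension. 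Since Part 3 is also what your Part 4 implicitly leans on to rule out unexpected strata when $k=2$, the proposal as written does not constitute a proof of the theorem; it reproduces the framework of \cite{sette} for Parts 1 and 2 and leaves the core of Parts 3 and 4 unproved.
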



\subsection{Matrices $A(\mathcal{A}_\infty)$ and $A_{\mathbb{T}}(\mathcal{A}_\infty)$ }\label{plucker}
Let $\alpha_i = (a_{i1}, \dots , a_{ik}$) be  the normal vectors of hyperplanes $H^0_i$, $1 \leq i \leq n$, in the generic arrangement $\mathcal{A}$ in $\C^k$. Normal here is intended with respect to the usual dot product 
$$
(a_1, \ldots, a_k)\cdot (v_1,\ldots, v_k)=\sum_i a_iv_i \quad .
$$
Then the normal vectors to hyperplanes $D_L$, $L = \{s_1< \dots < s_{k+1} \} \subset$ [$n$] in $ \mathbb{S} \simeq \mathbb{C}^n$ are nonzero vectors of the form 
\begin{equation}\label{eq:normvec}
\alpha_L = \sum^{k+1}_{i=1} (-1)^i \det (\alpha_{s_1}, \dots, \hat{\alpha_{s_i}}, \dots, \alpha_{s_{k+1}})e_{s_i},
\end{equation} 
where $\{e_j\}_{1\leq j \leq n}$ is the standard basis of $\mathbb{C}^n$ (cf. \cite{athana}). 

Let $\mathcal{P}_{k+1}([n]) = \{L \subset [n] \mid |L| = k+1\}$ be the set of cardinality $k+1$ subsets of $[n]$, we denote by
\begin{equation}\label{An}
 A(\mathcal{A}_\infty) = (\alpha_L)_{L \in \mathcal{P}_{k+1} ([n])}
\end{equation}
the matrix having in each row the entries of vectors $\alpha_L$ normal to hyperplanes $D_{L}$ and by $A_{\mathbb{T}}(\mathcal{A}_\infty)$ the submatrix  of $A(\mathcal{A}_\infty)$ with rows $\alpha_L$, $L \in \mathbb{T}$, $\mathbb{T} \subset \mathcal{P}_{k+1}([n])$ of cardinality $m$.

\subsection{Polynomial $p_{\mathbb{T}}(a_{ij})$ }
Construction in Subsection \ref{plucker} naturally holds also in real case, i.e. $\A$ arrangement in $\RR^k$. In this case Athanasiadis (see \cite{athana} ) defined the polynomial
\begin{equation}\label{eq:athanapoly}
p_{\mathbb{T}}(a_{ij}) = \sum_{\substack{J \subset [n] \\ |J|=m}} \det [A_{\mathbb{T},J}(\mathcal{A}_\infty)]^2 \quad 
\end{equation} 
in the variable $a_{ij}$ given by the sum of the squares of determinants of the $m \times m$ submatrices $A_{\mathbb{T}, J}$ of $A_{\mathbb{T}}(\mathcal{A}_\infty)$ obtained considering columns $j \in J$. 
Notice that if $\A$ is a generic arrangement in $\RR^k$, if $\mathbb{T} = \{L_1, L_2, L_3\}$ is a good $3s$-partition then condition in Lemma \ref{lem:sette} is equivalent to $p_{\mathbb{T}}(a_{ij}) = 0$.

\subsection{Grassmannian $Gr(k,n)$}

Let $Gr(k, n)$ be the Grassmannian of $k$-dimensional subspaces of $\CC^n$ and 
\begin{equation}\label{eq:plemb}
\begin{split}
\gamma: Gr(k, n) &\to \mathbb{P}(\bigwedge^k \CC^n)  \\
<v_1, \dots, v_k> &\mapsto [v_1 \wedge \dots \wedge v_k],
\end{split}
\end{equation}
the Pl\"{u}cker embedding. Then $[x] \in \mathbb{P}(\bigwedge^k \CC^n)$ is in $\gamma(Gr(k,n))$ if and only if the map
\begin{equation}
\begin{split}
\varphi_x: \CC^n &\to \bigwedge^{k+1} \CC^n  \\ 
v &\mapsto v \wedge x
\end{split}
\end{equation} 
has kernel of dimension $k$, i.e. ker $\varphi_x = <v_1, \dots, v_k>$. If $e_1, \dots,e_n $ is a basis of $\CC^n$ then $e_I = e_{i_1}~\wedge~\dots~\wedge~e_{i_k}$, $I= \{ i_1, \dots, i_k \} \subset [n], i_1 < \dots < i_k$, is a basis for $\bigwedge^k \CC^n$ and $x \in \bigwedge^k \CC^n$ can be written uniquely as 

\begin{equation}\label{betaformula}
x = \displaystyle \sum_{\substack {I \subseteq [n] \\ |I| = k}} \beta_I e_I 
= \sum_{1 \leq i_1 < \dots < i_k \leq n} \beta_{i_1 \dots i_k} (e_{i_1} \wedge \dots \wedge e_{i_k}) 
\end{equation}
where homogeneous coordinates $\beta_I$ are the Pl\"{u}cker coordinates 
on $\mathbb{P}(\bigwedge^k \CC^n) = \mathbb{P}^{\binom nk-1}$ associated to 
the ordered basis $e_1, \dots, e_n$ of $\CC^n$. With this choice of basis for $\CC^n$ the matrix $M_x=(b_{ij})$ associated to $\varphi_x$ is the $\binom {n}{k+1} \times n$ matrix with rows indexed by ordered subsets $I \subseteq [n], |I| = k$, and entries $b_{ij}=(-1)^i \beta_{I\cup\{j\}\setminus \{i\}}$ if $i \in I$, $b_{ij}=0$ otherwise. Pl\"{u}cker relations, i.e conditions for dim(ker $\varphi_x$) = $k$, are vanishing conditions of all $(k+1) \times (k+1)$ minors of $M_x$. It is well known (see for instance \cite{harris}) that Pl\"{u}cker relations are degree 2 relations and they can also be written as 
\begin{equation}\label{pluck}
\sum_{l=0}^{k} (-1)^l \beta_{i_1 \dots i_{k-1}j_l} \beta_{j_0 \dots \hat{j_l} \dots j_k} =0
\end{equation}
for any $2k$-tuple $(i_1, \dots, i_{k-1}, j_0, \dots, j_k)$.

\begin{rem} Notice that vectors $\alpha_L$ in equation (\ref{eq:normvec}) normal to hyperplanes $D_L$  correspond to rows $I=L$ in the Pl\"{u}cker matrix $M_x$, that is $$A(\mathcal{A}_\infty)=M_x \quad .$$ 
For this reason in the rest of the paper we will call $A(\mathcal{A}_\infty)$  Pl\"{u}cker coordinate matrix.
Notice that, in particular, $\det (\alpha_{s_1}, \dots, \hat{\alpha_{s_i}}, \dots, \alpha_{s_{k+1}})$ is the Pl\"{u}cker coordinate $\beta_I , I = \{ s_1, s_2, \dots, s_{k+1} \} \backslash \{ s_i \}$.
\end{rem}
In the following section we give an example to illustrate the general Theorem in section \ref{thmUnk}. This example appears also in \cite{falk}, \cite{sette} and, in the context of oriented matroids, in \cite{BaKe}.


\section{Example $\mathcal{B}$(6, 3, $\mathcal{A}_\infty$) in real case} \label{thmU63}

Consider $\mathcal{A} = \{ H_1^0, H_2^0, \dots, H_6^0 \}$ be a generic arrangement of hyperplanes in $\mathbb{R}^3$ with normal vectors 
$\alpha_i = ( a_{i1}, a_{i2}, a_{i3} )$, $1 \leq i \leq 6$ and $H_i^{t_i}$ be hyperplane obtained by translating $H_i^0$ along direction $\alpha_i$, i.e., $H_i^{t_i} = H_i^0 + t_i \alpha_i$, $t_i \in \mathbb{R}$.
Let $\mathbb{T} = \{ L_1, L_2, L_3\}$ be the good $6$-partition with $L_1 = \{ 1, 2, 3, 4 \}, L_2 = \{ 1, 2, 5, 6 \}$ and $L_3 = \{ 3, 4, 5, 6 \} $,  then  
$$A_{\mathbb{T}} (\mathcal{A}_{\infty}) = \begin{pmatrix}
\alpha_{L_1}\\
\alpha_{L_2}\\
\alpha_{L_3}
\end{pmatrix}
=
\begin{pmatrix}
-\beta_{234} & \beta_{134} & -\beta_{124} & \beta_{123} & 0 & 0\\
-\beta_{256} & \beta_{156} & 0 & 0 & -\beta_{126} & \beta_{125}\\
0 & 0 & -\beta_{456} & \beta_{356} & -\beta_{346} & \beta_{345}
\end{pmatrix}
, \quad
\beta_{ijk} = \det \begin{pmatrix}
a_{i1} & a_{j1} & a_{k1} \\
a_{i2} & a_{j2} & a_{k2} \\
a_{i3} & a_{j3} & a_{k3} 
\end{pmatrix}
$$

is a submatrix of the Pl\"{u}cker coordinate matrix $A (\mathcal{A}_{\infty})$.\\
Let $\alpha_i \times \alpha_j$ be the cross product of $\alpha_i, \alpha_j$ corresponding to the direction orthogonal to both $\alpha_i$ and $\alpha_j$ and denote by $(\alpha_i \times \alpha_{i+1})$ the matrix $\begin{pmatrix}
\alpha_1 \times \alpha_2\\
\alpha_3 \times \alpha_4\\
\alpha_5 \times \alpha_6
\end{pmatrix}$. Then $\alpha_i \times \alpha_j$ is the direction of the line $H_i \cap H_j$, since $\alpha_i$ and $\alpha_j$ are, respectively, directions orthogonal to $H_i$ and $H_j$ and  $\rank A_{\mathbb{T}} (\mathcal{A}_{\infty})=2$ if and only  $\rank(\alpha_i \times \alpha_{i+1})=2$. Indeed $\rank(A_{\mathbb{T}} (\mathcal{A}_{\infty}))=2$ is equivalent to codim ($D_{L_1} \cap D_{L_2} \cap D_{L_3}$) = 2, hence by Lemma \ref{lem:sette}, the points  ${\displaystyle \bigcap_{i \in L_1 \cap L_2}} \bar H_i^{t_i} \cap H_\infty = \bar H_3^{t_3} \cap \bar H_4^{t_4} \cap H_\infty, {\displaystyle \bigcap_{i \in L_1 \cap L_3}} \bar H_i^{t_i} \cap H_\infty = \bar H_1^{t_1} \cap \bar H_2^{t_2} \cap H_\infty$, and ${\displaystyle \bigcap_{i \in L_2 \cap L_3}} \bar H_i^{t_i} \cap H_\infty = \bar H_5^{t_5} \cap \bar H_6^{t_6} \cap H_\infty$ are collinear, that is 
directions of $H_i^{t_i} \cap H_{i+1}^{t_{i+1}}$ are dependent and hence $\rank
(\alpha_i \times \alpha_{i+1})=2$ (see Fig. \ref{fig1} ). 

\begin{figure}
 \begin{center}
  \includegraphics[width=80mm]{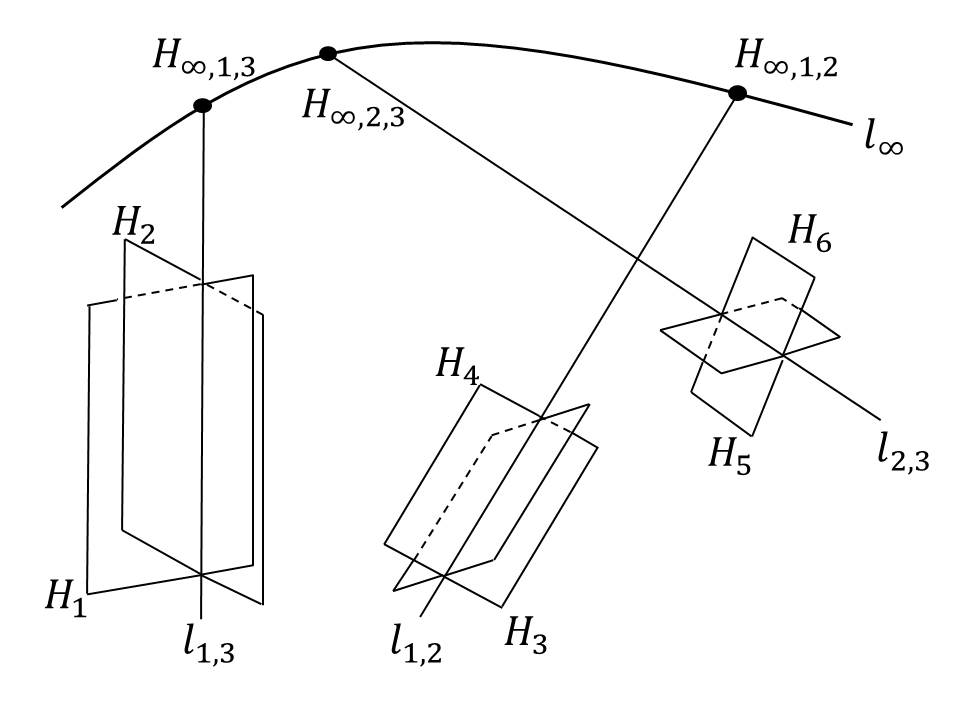}
 \end{center}
 \caption{Picture of case $\mathcal{B}(6, 3, \mathcal{A}_{\infty}^0)$}
 \label{fig1}
\end{figure}

Rank of $A_\mathbb{T}(\mathcal{A}_\infty)$ is equal to 2 if and only if $\beta_{ijk}$  are solutions of the system:

\begin{equation}\label{rel:beta3}
 \begin{aligned}[c]
(I)\begin{cases}
   -\beta_{456}( \beta_{134}\beta_{256} - \beta_{234}\beta_{156}) = 0 \\
    \beta_{356}( \beta_{134}\beta_{256} - \beta_{234}\beta_{156}) = 0 \\
   -\beta_{346}( \beta_{134}\beta_{256} - \beta_{234}\beta_{156}) = 0 \\
    \beta_{345}( \beta_{134}\beta_{256} - \beta_{234}\beta_{156}) = 0 \\
   -\beta_{256}( \beta_{124}\beta_{356} - \beta_{123}\beta_{456}) = 0 \\
   \beta_{156}( \beta_{124}\beta_{356} - \beta_{123}\beta_{456}) = 0 \\
   -\beta_{126}( \beta_{124}\beta_{356} - \beta_{123}\beta_{456}) = 0 \\
    \beta_{125}( \beta_{124}\beta_{356} - \beta_{123}\beta_{456}) = 0 \\
    -\beta_{234}( \beta_{125}\beta_{346} - \beta_{126}\beta_{345}) = 0\\
      \beta_{134}( \beta_{125}\beta_{346} - \beta_{126}\beta_{345}) = 0 \\
   -\beta_{124}( \beta_{125}\beta_{346} - \beta_{126}\beta_{345}) = 0 \\
    \beta_{123}( \beta_{125}\beta_{346} - \beta_{126}\beta_{345}) = 0   
    \end{cases}
\end{aligned}
\qquad \mbox{ and} \qquad
 \begin{aligned}[c]
 (II) \begin{cases}
  \beta_{234}\beta_{126}\beta_{456} + \beta_{124}\beta_{256}\beta_{346} = 0 \\
   -(\beta_{234}\beta_{125}\beta_{456} + \beta_{124}\beta_{256}\beta_{345}) = 0 \\
   -(\beta_{234}\beta_{126}\beta_{356} + \beta_{123}\beta_{256}\beta_{346}) = 0 \\
     \beta_{234}\beta_{125}\beta_{356} + \beta_{123}\beta_{256}\beta_{345} = 0 \\    
  -(\beta_{134}\beta_{126}\beta_{456} + \beta_{124}\beta_{156}\beta_{346}) = 0 \\
    \beta_{134}\beta_{125}\beta_{456} + \beta_{124}\beta_{156}\beta_{345} = 0 \\
    \beta_{134}\beta_{126}\beta_{356} + \beta_{123}\beta_{156}\beta_{346} = 0 \\
  -(\beta_{134}\beta_{125}\beta_{356} + \beta_{123}\beta_{156}\beta_{345}) = 0 \\
    \end{cases} \quad 
  \end{aligned}
\end{equation}

and polynomial $p_{\mathbb{T}}(a_{ij})$ is 

\bigskip

$p_{\mathbb{T}}(a_{ij}) = {\displaystyle \sum_{\substack{J \subset [6] \\ |J|=3}}} \det (A_{\mathbb{T}, J})^2 =$ 
$(\beta_{134}\beta_{256} - \beta_{234}\beta_{156})^2 ({\displaystyle \sum_{\substack {I_1 \subset \{3, 4, 5, 6 \} \\ |I_1| =3}}} \beta_{I_1}^2)$
+
$(\beta_{124}\beta_{356} - \beta_{123}\beta_{456})^2 ({\displaystyle \sum_{\substack {I_2 \subset \{1, 2, 5, 6 \} \\ |I_2| =3}}} \beta_{I_2}^2)$
+
$(\beta_{125}\beta_{346} - \beta_{126}\beta_{345})^2 ({\displaystyle \sum_{\substack {I_3 \subset \{1, 2, 3, 4 \} \\ |I_3| =3}}} \beta_{I_3}^2)$
+
${\displaystyle \sum_{\substack {i = 5, 6 \\ j = 3, 4}}} (\beta_{234} \beta_{12i} \beta_{j56} + \beta_{12j} \beta_{256} \beta_{34i})^2$
+
${\displaystyle \sum_{\substack {i = 5, 6 \\ j = 3, 4}}} (\beta_{134} \beta_{12i} \beta_{j56} + \beta_{12j} \beta_{156} \beta_{34i})^2$.

\bigskip

On the other hand the condition $\rank(\alpha_i \times \alpha_{i+1})=2$
is simply
$\det (\alpha_i \times \alpha_{i+1})=0$ and if we define 
\begin{equation}
\widetilde{p}_\mathbb{T}(a_{ij}) = [ \det (\alpha_i \times \alpha_{i+1}) ]^2
= \{ (a_{12}a_{23} - a_{13}a_{22}) \Delta_{11} +  (a_{11}a_{23} - a_{13}a_{21}) \Delta_{12} +  (a_{11}a_{22} - a_{12}a_{2}) \Delta_{13} \}^2,
\end{equation}
$\Delta_{1l}$ cofactors of $(\alpha_i \times \alpha_{i+1})$, then  $p_{\mathbb{T}}(a_{ij}) = 0$ if and only if $\widetilde{p}_\mathbb{T}(a_{ij}) = 0$. That is polynomial $\widetilde{p}_\mathbb{T}(a_{ij})$ is a polynomial of, in general, lower degree than $p_\mathbb{T}(a_{ij})$ with the same set of zeros.


\section{Polynomial $\widetilde{p}_\mathbb{T}(a_{ij})$ in $\mathcal{B}(n,k, \mathcal{A}_{\infty})$ in real case}\label{thmUnk}

\subsection{Case $\mathcal{B}$($n$, 3, $\mathcal{A}_\infty$)}\label{ncase}
 It is straightforward to generalize the example in section \ref{thmU63} to the case of $n$ hyperplanes in $\mathbb{R}^3$. Denote by $(\alpha_{i_j} \times \alpha_{i_{j+1}})$ the matrix $\begin{pmatrix}
\alpha_{i_1} \times \alpha_{i_2}\\
\alpha_{i_3} \times \alpha_{i_4}\\
\alpha_{i_5} \times \alpha_{i_6}  
\end{pmatrix}$, the following Theorem holds.

\begin{thm}\label{thm:Bn3}
Let $\mathcal{A}$ be a generic arrangement of $n$ hyperplanes in $\mathbb{R}^3$ with normal vectors 
$\alpha_i = (a_{i1}, a_{i2}, a_{i3})$. Let $\mathbb{T} = \{ L_1, L_2, L_3\}$ be a good 6-partition with a choice $L_1 = \{ i_1, i_2, i_3, i_4\}, L_2 = \{ i_3, i_4, i_5, i_6\}$ and $L_3 = \{ i_1, i_2, i_5, i_6 \}$ and $A_\mathbb{T}(\mathcal{A_\infty})$ be the matrix with rows $\alpha_{L_1}$, $\alpha_{L_2}$, $\alpha_{L_3}$. Then the followings are equivalent:
\begin{enumerate}
\item $\rank A_\mathbb{T}(\mathcal{A}_\infty)=2$;
\item $p_\mathbb{T}(a_{ij}) = 0$;
\item $\rank (\alpha_{i_j} \times \alpha_{i_{j+1}})=2$;
\item $\widetilde{p}_\mathbb{T}(a_{ij}) = [\det (\alpha_{i_j} \times \alpha_{i_{j+1}})]^2=0$.
\end{enumerate} 
\end{thm}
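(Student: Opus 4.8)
The plan is to prove the cyclic chain $(1)\Leftrightarrow(2)$, $(3)\Leftrightarrow(4)$ and $(1)\Leftrightarrow(3)$ after first reducing to six hyperplanes. Since $L_1\cup L_2\cup L_3=\{i_1,\dots,i_6\}$, every object appearing in the four statements — the rows $\alpha_{L_1},\alpha_{L_2},\alpha_{L_3}$ of $A_\mathbb{T}(\mathcal{A}_\infty)$, the polynomial $p_\mathbb{T}(a_{ij})$, and the cross products $\alpha_{i_j}\times\alpha_{i_{j+1}}$ — depends only on the six normals $\alpha_{i_1},\dots,\alpha_{i_6}$. A subarrangement of a generic arrangement is generic, so I may replace $\mathcal{A}$ by the generic arrangement $\{H_{i_1}^0,\dots,H_{i_6}^0\}$ in $\mathbb{R}^3$ and assume $n=6$, $k=3$, $s=2$; then $\mathbb{T}$ is a good $6$-partition and the situation is exactly that of Section \ref{thmU63} up to relabelling, which is what makes Lemma \ref{lem:sette} (stated only for $n=3s$) applicable.

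First I record two rank bounds valid for any generic $\mathcal{A}$. Genericity makes every $3\times3$ minor $\beta_{\cdots}$ of the normals nonzero, so $\alpha_{L_1}$ has a nonzero entry in column $i_1$ while $\alpha_{L_2}$ vanishes there (as $i_1\notin L_2$), and $\alpha_{L_2}$ is nonzero in column $i_5$ where $\alpha_{L_1}$ vanishes; hence $\alpha_{L_1},\alpha_{L_2}$ are independent and $\rank A_\mathbb{T}(\mathcal{A}_\infty)\ge 2$. Likewise, if $\alpha_{i_1}\times\alpha_{i_2}$ were parallel to $\alpha_{i_3}\times\alpha_{i_4}$ then $\langle\alpha_{i_1},\alpha_{i_2}\rangle=\langle\alpha_{i_3},\alpha_{i_4}\rangle$ would force $\alpha_{i_1},\alpha_{i_2},\alpha_{i_3}$ to be dependent, contradicting genericity, so $\rank(\alpha_{i_j}\times\alpha_{i_{j+1}})\ge 2$. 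With these, $(1)\Leftrightarrow(2)$ and $(3)\Leftrightarrow(4)$ are immediate over $\mathbb{R}$: $p_\mathbb{T}$ is a sum of squares of the $3\times3$ minors of $A_\mathbb{T}$, so $p_\mathbb{T}=0$ iff all those minors vanish iff $\rank A_\mathbb{T}\le2$, which by the bound means $\rank A_\mathbb{T}=2$; and $\widetilde{p}_\mathbb{T}=[\det(\alpha_{i_j}\times\alpha_{i_{j+1}})]^2=0$ iff the $3\times3$ matrix is singular iff, again by the bound, its rank equals $2$.

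The heart is $(1)\Leftrightarrow(3)$, routed through Lemma \ref{lem:sette}. I first identify $\rank A_\mathbb{T}=2$ with $\codim(D_{L_1}\cap D_{L_2}\cap D_{L_3})=2$: the $D_{L_i}$ are affine hyperplanes in $\mathbb{S}\cong\mathbb{C}^6$ with normals $\alpha_{L_i}$, and their intersection is nonempty because the locus of translates for which all six hyperplanes pass through a single common point $p\in\mathbb{C}^3$ lies in all three $D_{L_i}$; for a consistent system the codimension of the solution set equals $\rank A_\mathbb{T}$. Lemma \ref{lem:sette} (with $s=2$) then converts $\codim=2$ into the statement that the three subspaces $H_{\infty,i,j}=\bigcap_{t\in L_i\cap L_j}H_{\infty,t}$ span a proper subspace of $H_\infty\cong\mathbb{P}^2$. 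I then compute these subspaces: for $L_i\cap L_j=\{a,b\}$ one has $H_{\infty,a}\cap H_{\infty,b}=\mathbb{P}(\alpha_a^\perp\cap\alpha_b^\perp)$, a single point of $\mathbb{P}^2$ represented by the direction $\alpha_a\times\alpha_b$; the pairs $L_1\cap L_3=\{i_1,i_2\}$, $L_1\cap L_2=\{i_3,i_4\}$, $L_2\cap L_3=\{i_5,i_6\}$ thus give the points $[\alpha_{i_1}\times\alpha_{i_2}]$, $[\alpha_{i_3}\times\alpha_{i_4}]$, $[\alpha_{i_5}\times\alpha_{i_6}]$. Three points of $\mathbb{P}^2$ span a proper subspace (are collinear) exactly when their three representative vectors are linearly dependent, i.e. when $\rank(\alpha_{i_j}\times\alpha_{i_{j+1}})\le2$, which by the bound is $(3)$. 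Chaining the equivalences closes the loop.

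The step I expect to demand the most care is this last one: justifying that $\codim(D_{L_1}\cap D_{L_2}\cap D_{L_3})$ genuinely equals $\rank A_\mathbb{T}$ — which needs the nonemptiness (consistency) observation above, not merely a count of independent normals — and matching each intersection $L_i\cap L_j$ to the correct cross product, so that the ``proper subspace'' condition of Lemma \ref{lem:sette} becomes precisely the singularity of $(\alpha_{i_j}\times\alpha_{i_{j+1}})$. The two sum-of-squares/determinant equivalences are then routine once the bounds $\rank\ge2$ are in place.
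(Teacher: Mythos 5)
Your proof is correct and follows essentially the same route as the paper: the equivalences $(1)\Leftrightarrow(2)$ and $(3)\Leftrightarrow(4)$ from the sum-of-squares definitions, and $(1)\Leftrightarrow(3)$ by identifying $\rank A_\mathbb{T}(\mathcal{A}_\infty)=2$ with $\codim(D_{L_1}\cap D_{L_2}\cap D_{L_3})=2$, invoking Lemma \ref{lem:sette}, and recognizing the three points at infinity as $[\alpha_{i_1}\times\alpha_{i_2}]$, $[\alpha_{i_3}\times\alpha_{i_4}]$, $[\alpha_{i_5}\times\alpha_{i_6}]$, exactly as in Section \ref{thmU63} after relabelling. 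You in fact supply details the paper leaves implicit (the lower bounds $\rank\ge 2$ and the nonemptiness of the affine intersection), which strengthens rather than changes the argument.
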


\begin{proof}
The equivalences (1) $\Leftrightarrow$ (2) and (3) $\Leftrightarrow$ (4) are obvious by definitions of $p_\mathbb{T}(a_{ij})$ and $\widetilde{p}_\mathbb{T}(a_{ij})$. The proof that (1) $\Leftrightarrow$ (3) can be obtained by remarks in Section\ref{thmU63} relabeling indices $1, \dots, 6$ with $i_1, \dots, i_6$.
\end{proof}

\begin{rem} Notice that since $\widetilde{p}_\mathbb{T}(a_{ij}) = [\det (\alpha_{i_j} \times \alpha_{i_{j+1}})]^2$, then $\widetilde{p}_\mathbb{T}(a_{ij}) =0$ if and only if $\det (\alpha_{i_j} \times \alpha_{i_{j+1}})=0$ and equivalence of conditions $(1), (3) $ and $(4)$ in Theorem \ref{ncase} holds also for generic arrangements in $\CC^3$.
\end{rem}


\subsection{Generalization to $\mathcal{B}(n,k, \mathcal{A}_{\infty})$}
Let $\mathcal{A} = \{ H_1, \dots, H_n \}$ be a generic arrangement of hyperplanes in $\mathbb{R}^k$ and $\mathbb{T} = \{ L_1, L_2, L_3 \}$ be a good $3s$-partition of indices in [$n$]. If $\alpha_\tau$ are normal vectors to $H_\tau \in \mathcal{A}$, $\tau = 1, \dots, n$, $T = \{ j_1, \cdots, j_t \}$ a subset of [$n$] which has empty intersection with $L_1 \cup L_2 \cup L_3$, define vector spaces
$$U_{i,j}^{\perp} = \{ v \in \mathbb{R}^k \ | \ v \cdot \alpha_\tau = 0, \tau \in L_i \cap L_j \},$$

where $v \cdot \alpha_\tau$ is the scalar product of $v$ and $\alpha_\tau$, and  
\begin{equation}\label{eq:W_T}
   W_T 
   = \begin{cases}
  	  \mathbb{R}^k & \text{($T = \emptyset$)} \\
   	 \{ v \in \mathbb{R}^k \ | \ v \cdot \alpha_\tau = 0,  \tau \in T \} & \text{($T \neq \emptyset$)} \qquad . 
     \end{cases}
\end{equation}
Then $W_T$ is the vector space associated to ${\displaystyle \bigcap_{\tau \in T}} H_\tau$ and $U_{i,j}^{\perp} \cap W_T = \{ v \in \mathbb{R}^k \ | \ v \cdot \alpha_\tau = 0, \tau \in (L_i \cap L_j) \cup T \}$ is a vector space of dimension $k - (s + t) $, where $s$ and $t$ are, respectively, cardinalities of $L_i \cap L_j$ and $T$. With above notations, define the polynomial
\begin{center}
$\widetilde{p}_{\mathbb{T},T}(a_{ij}) = {\displaystyle \sum_{\substack U \in \mathbb{U}_{\mathbb{T},T}}} [ \det U ]^2$,
\end{center}
where $\mathbb{U}_{\mathbb{T},T}$ is the set of all $k \times k$ submatrices of the $3(k - s - t) \times k$ matrix having as rows the vectors spanning $U_{i,j}^{\perp} \cap W_T$. 

If $k = 2s - 1$ and $n = 3s$, $s \geq 2$, we have $T = \emptyset$ and hence $U_{i,j}^{\perp} \cap W_T=U_{i,j}^{\perp}$ is a space of dimension $\dim U_{i,j}^{\perp}=s - 1$. $\mathbb{U}_{\mathbb{T}, \emptyset}$ is the set of all $(2s-1) \times (2s-1)$ submatrices of the $3(s - 1) \times (2s-1)$ matrix having as rows the vectors spanning $U_{i,j}^{\perp}$ and the following lemma equivalent to Lemma \ref{lem:sette} holds.

\begin{lem} Let $s \geq 2$, $n=3s$, $k= 2s - 1$, i.e. $T = \emptyset$, and $\A$ be a generic arrangement of $n$ hyperplanes in $\RR^k$. Given a good $3s$-partition $\mathbb{T} = \{ L_1, L_2, L_3 \}$ of $[3s]=[n]$, $U_{i,j}^{\perp}$ span a proper subspace of $\mathbb{R}^{k}$ if and only if the rank of $A_\mathbb{T}(\mathcal{A}_{\infty})$ is 2. That is $\widetilde{p}_{\mathbb{T}, \emptyset}(a_{ij}) = 0$ if and only if $p_\mathbb{T}(a_{ij}) = 0$.
\end{lem}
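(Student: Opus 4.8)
The plan is to reduce the statement to Lemma \ref{lem:sette} by recognizing that the coordinate-free subspaces $U_{i,j}^{\perp}$ are exactly the affine cones over the projective subspaces $H_{\infty,i,j}$ appearing there. First I would make the trace-at-infinity identification explicit. Working in $\mathbb{P}^k$ with $H_\infty = \{ x_0 = 0 \}$, the projective closure of $H^0_t = \{ \alpha_t \cdot x = b_t \}$ meets $H_\infty$ in $\{ x_0 = 0,\ \alpha_t \cdot (x_1, \dots, x_k) = 0 \}$, so viewing $H_\infty$ as $\mathbb{P}^{k-1} = \mathbb{P}(\mathbb{R}^k)$ one gets $H_{\infty, t} = \mathbb{P}(\{ v \in \mathbb{R}^k : v \cdot \alpha_t = 0 \})$. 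Intersecting over $t \in L_i \cap L_j$ yields $H_{\infty, i, j} = \mathbb{P}(U_{i,j}^{\perp})$. Because $\mathcal{A}$ is generic and $|L_i \cap L_j| = s$, the $s$ normals $\{ \alpha_\tau : \tau \in L_i \cap L_j \}$ are linearly independent, so $\dim U_{i,j}^{\perp} = k - s = s-1$, which matches the codimension-$s$ claim in Lemma \ref{lem:sette}.

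Next I would translate the span conditions across this dictionary. The three subspaces $H_{\infty, i, j}$ span a proper subspace of $H_\infty = \mathbb{P}(\mathbb{R}^k)$ if and only if $U_{1,2}^{\perp} + U_{1,3}^{\perp} + U_{2,3}^{\perp}$ is a proper subspace of $\mathbb{R}^k$, that is, if and only if the $3(s-1) \times (2s-1)$ matrix whose rows span the three $U_{i,j}^{\perp}$ has rank strictly less than $k = 2s-1$. Since that sum is proper exactly when every $k \times k$ minor of the matrix vanishes, and $\widetilde{p}_{\mathbb{T}, \emptyset}(a_{ij})$ is by definition the sum of the squares of precisely those minors, over $\mathbb{R}$ properness is equivalent to $\widetilde{p}_{\mathbb{T}, \emptyset}(a_{ij}) = 0$.

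It then remains to chain the equivalences. By Lemma \ref{lem:sette} the span condition is equivalent to the codimension of $D_{L_1} \cap D_{L_2} \cap D_{L_3}$ being $2$, which, exactly as recorded in Section \ref{thmU63}, is equivalent to $\rank A_\mathbb{T}(\mathcal{A}_\infty) = 2$. Finally, $p_\mathbb{T}(a_{ij})$ is the sum of the squares of the $3 \times 3$ minors of $A_\mathbb{T}(\mathcal{A}_\infty)$, so over $\mathbb{R}$ the condition $p_\mathbb{T}(a_{ij}) = 0$ is equivalent to $\rank A_\mathbb{T}(\mathcal{A}_\infty) \leq 2$; and since for a generic $\mathcal{A}$ any two of the rows $\alpha_{L_1}, \alpha_{L_2}, \alpha_{L_3}$ are linearly independent (they have distinct supports $L_1, L_2, L_3$), we have $\rank A_\mathbb{T}(\mathcal{A}_\infty) \geq 2$ always, whence $p_\mathbb{T}(a_{ij}) = 0 \Leftrightarrow \rank A_\mathbb{T}(\mathcal{A}_\infty) = 2$. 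Composing all the equivalences gives $\widetilde{p}_{\mathbb{T}, \emptyset}(a_{ij}) = 0 \Leftrightarrow p_\mathbb{T}(a_{ij}) = 0$, as claimed.

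The main obstacle is the only genuinely non-formal point: pinning down the identification $H_{\infty, i, j} = \mathbb{P}(U_{i,j}^{\perp})$ and verifying, via genericity of $\mathcal{A}$, that the subspaces $U_{i,j}^{\perp}$ have the expected dimension $s-1$; once this correspondence is secured, everything else is bookkeeping with ranks and sums of squares. A secondary point requiring care is the passage from $\rank A_\mathbb{T}(\mathcal{A}_\infty) \leq 2$ to $\rank A_\mathbb{T}(\mathcal{A}_\infty) = 2$, which is precisely where genericity (ruling out rank $\leq 1$) is invoked.
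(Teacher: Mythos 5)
Your proof is correct and follows essentially the same route as the paper's: both arguments chain the equivalence $\rank A_\mathbb{T}(\mathcal{A}_\infty)=2 \Leftrightarrow \codim(D_{L_1}\cap D_{L_2}\cap D_{L_3})=2$ through Lemma \ref{lem:sette} and then identify each $H_{\infty,i,j}$ with (the projectivization of) $U_{i,j}^{\perp}$ to translate the spanning condition into the vanishing of the $k\times k$ minors defining $\widetilde{p}_{\mathbb{T},\emptyset}$. Your version is slightly more careful on two points the paper leaves implicit -- the dimension count $\dim U_{i,j}^{\perp}=s-1$ and the observation that genericity forces $\rank A_\mathbb{T}(\mathcal{A}_\infty)\geq 2$, so that vanishing of $p_\mathbb{T}$ gives rank exactly $2$ -- but the underlying argument is the same.
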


\begin{proof} Since $\mathbb{T}$ is a good $3s$-partition and $A_\mathbb{T}(\mathcal{A}_{\infty}) = (\alpha_L)_{L \in \mathbb{T}}$ is a $3 \times n$ matrix, the rank of the matrix $A_\mathbb{T}(\mathcal{A}_{\infty})$ is equal to 2 if and only if $\alpha_{L}, L \in \mathbb{T}$, are linearly dependent that is the intersection $D_{L_1} \cap D_{L_2} \cap D_{L_3}$ of hyperplanes in $\mathcal{B}(n, k, \mathcal{A}_{\infty})$ is a space of codimension 2. Then by Lemma \ref{lem:sette} this corresponds to $H_{\infty, i, j} =   \bigcap_{\tau \in L_i \cap L_j} \bar H_\tau \cap H_\infty$ $\subset H_{\infty}$ span a proper subspace in $H_{\infty}$. Let $V_{\tau}$ be the associated vector spaces to hyperplanes $H_{\tau}$, hence $V_{i,j} =  \bigcap_{\tau \in L_i \cap L_j} V_\tau$ are the associated vector spaces to $H_{i,j} =  \bigcap_{\tau \in L_i \cap L_j} H_\tau$ and $V_{i,j} = U_{i,j}^{\perp}$ since $v \in V_{i,j}$ if and only if $v \cdot \alpha_{\tau} = 0$ for any $\tau \in L_i \cap L_j$. It follows that $H_{\infty, i, j}$ span a proper subspace of $H_{\infty}$ if and only if $U_{i,j}^{\perp}$ span a proper subspace of $\mathbb{R}^k$. That is $\det U =0$ for any $U \in \mathbb{U}_{\mathbb{T}, \emptyset}$ or, equivalently, $\widetilde{p} _{\mathbb{T}, \emptyset} (a_{ij}) = 0$. 
\end{proof}
Notice that if $s$ = 2, i.e. case $\mathcal{B}$(6, 3, $\mathcal{A}_\infty$), $\widetilde{p}_{\mathbb{T}, \emptyset}(a_{ij})$ coincides with $\widetilde{p}_{\mathbb{T}}(a_{ij})$ defined in Section \ref{thmU63}. In this case 1-dimensional subspaces $U_{1,2}^{\perp}$, $U_{1,3}^{\perp}$ and $U_{2,3}^{\perp}$ are spanned, respectively, by $\alpha_{1} \times \alpha_{2}, \alpha_{3} \times \alpha_{4}$ and $\alpha_{5} \times \alpha_{6}$, that is they are the lines drown in Figure \ref{fig1}.\\
Analogously to \cite{sette} we call a generic arrangement $\mathcal{A}$ = $\{ W_1,  \cdots, W_{3s} \}$ in $\mathbb{R}^{2s-1}$, $s \geq 2$, dependent if there exists a good 3$s$-partition such that  $U_{i,j}^{\perp}$ span a proper subspace of $\mathbb{R}^{2s-1}$. With this notation, by Lemma \ref{lem:sette} and Theorem \ref{thm:sette}, the following theorem holds.

\begin{thm}\label{thm:Bnk} Let $\A$ be a generic arrangement of $n$ hyperplanes in $\mathbb{R}^k$, $\mathbb{T}$ a good $3s$-partition, $3s \leq n$, and $T=[n] \setminus \cup_{L \in \mathbb{T}}L$. If $W_T$ is the vector space defined in equation (\ref{eq:W_T}), then rank of $A_{\mathbb{T}} (\mathcal{A_\infty})$ is equal to $2$ if and only if the restriction arrangement
$$\mathcal{A}_{W_T}= \{ H \cap {\displaystyle \bigcap_{\tau \in T}} H_\tau  \ | \ H \in \mathcal{A} \backslash \{H_i\}_{i \in T} \} $$
is dependent. With this choice of $T$ and $\mathbb{T}$ we get that $p_{\mathbb{T}}(a_{ij}) =0$ if and only if $\widetilde{p}_{\mathbb{T}, T} (a_{ij}) =0.$
\end{thm}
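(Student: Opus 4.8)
The plan is to establish the rank criterion first and then read off the polynomial statement from it, using Theorem \ref{thm:sette} to transfer between the discriminantal arrangement and the restriction $\mathcal{A}_{W_T}$. First I would note that the three rows of $A_{\mathbb{T}}(\mathcal{A}_\infty)$ are precisely the normals $\alpha_{L_1},\alpha_{L_2},\alpha_{L_3}$ to the hyperplanes $D_{L_1},D_{L_2},D_{L_3}\subset\mathbb{S}\simeq\mathbb{C}^n$, so that the codimension of $D_{L_1}\cap D_{L_2}\cap D_{L_3}$ equals $\rank A_{\mathbb{T}}(\mathcal{A}_\infty)$. Since $\mathbb{T}=\{L_1,L_2,L_3\}$ is a good $3s$-partition, the three rows have pairwise distinct supports and are pairwise non-proportional for generic $\mathcal{A}$, so the rank is always at least $2$; hence $\rank A_{\mathbb{T}}(\mathcal{A}_\infty)=2$ is equivalent to the codimension of $D_{L_1}\cap D_{L_2}\cap D_{L_3}$ being $2$. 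By Theorem \ref{thm:sette}(2) the latter holds if and only if the restriction arrangement of $\mathcal{A}_\infty$ attached to this triple is dependent, and I would identify that restriction with $\mathcal{A}_{W_T}$: the three $D_{L_i}$ involve only the indices of $\bigcup_{L\in\mathbb{T}}L$, so the complementary set $T=[n]\setminus\bigcup_{L\in\mathbb{T}}L$ indexes exactly the hyperplanes one restricts along, and $\bigcap_{\tau\in T}H_\tau$ has direction space $W_T$.

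It then remains to recognise the dependency of $\mathcal{A}_{W_T}$ through the Lemma already proved for the case $T=\emptyset$. For $v\in W_T$ one has $v\cdot\alpha_\tau=v\cdot\mathrm{proj}_{W_T}\alpha_\tau$, so the spaces $U_{i,j}^{\perp}\cap W_T$ are exactly the subspaces playing the role of the $U_{i,j}^{\perp}$ for the arrangement $\mathcal{A}_{W_T}$ viewed inside $W_T$. The dimension bookkeeping $\dim(U_{i,j}^{\perp}\cap W_T)=k-(s+t)$ with $|T|=t=n-3s$ makes $\mathcal{A}_{W_T}$ a generic arrangement of $3s$ hyperplanes in $W_T\cong\mathbb{R}^{2s-1}$, which is the situation in which dependency is defined and in which the $T=\emptyset$ Lemma applies verbatim. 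That Lemma then gives: $\mathcal{A}_{W_T}$ is dependent if and only if the spaces $U_{i,j}^{\perp}\cap W_T$ span a proper subspace of $W_T$, if and only if every maximal minor of the matrix whose rows span the three spaces $U_{i,j}^{\perp}\cap W_T$ vanishes, if and only if $\widetilde{p}_{\mathbb{T},T}(a_{ij})=0$. Finally, over $\mathbb{R}$ the polynomial $p_{\mathbb{T}}(a_{ij})$ is a sum of squares of the $3\times 3$ minors of $A_{\mathbb{T}}(\mathcal{A}_\infty)$, so $p_{\mathbb{T}}(a_{ij})=0$ if and only if $\rank A_{\mathbb{T}}(\mathcal{A}_\infty)<3$, i.e. if and only if $\rank A_{\mathbb{T}}(\mathcal{A}_\infty)=2$; chaining the equivalences yields $p_{\mathbb{T}}(a_{ij})=0$ if and only if $\widetilde{p}_{\mathbb{T},T}(a_{ij})=0$.

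The step I expect to be the main obstacle is the identification carried out in the second half of the first paragraph, together with the reconciliation of the two notions of dependency. On one side Theorem \ref{thm:sette} and \cite{sette} express dependency of a restriction of $\mathcal{A}_\infty$ through the traces at infinity $H_{\infty,i,j}$ and the induced partition $I_1,I_2,I_3$; on the other side the present statement uses the spaces $U_{i,j}^{\perp}\cap W_T$. Matching them requires identifying $\bigcap_{\tau\in L_i\cap L_j}V_\tau$ with $U_{i,j}^{\perp}$ and the trace $H_{\infty,i,j}$ with the projectivization of $U_{i,j}^{\perp}\cap W_T$, exactly as in the proof of the $T=\emptyset$ Lemma, and checking that $\dim W_T=2s-1$ so that the hypotheses of that Lemma are genuinely satisfied. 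Once this dictionary is in place, the remaining assertions are routine determinantal rank computations.
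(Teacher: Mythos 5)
Your proposal follows the paper's own route: the paper offers no written proof of this theorem beyond the assertion that it follows from Lemma \ref{lem:sette} and Theorem \ref{thm:sette} once the $T=\emptyset$ case has been established in the preceding lemma, and your chain (rank $2$ $\Leftrightarrow$ codimension $2$ of $D_{L_1}\cap D_{L_2}\cap D_{L_3}$ $\Leftrightarrow$ dependency of the restriction $\Leftrightarrow$ vanishing of the maximal minors defining $\widetilde{p}_{\mathbb{T},T}$, together with $p_{\mathbb{T}}=0\Leftrightarrow$ rank $<3$ over $\mathbb{R}$) is exactly the intended argument, with the correct identification of the two notions of dependency. The one place where you are too quick is the assertion that $t=n-3s$ ``makes'' $W_T\cong\mathbb{R}^{2s-1}$: since $\dim W_T=k-t$, this requires the compatibility condition $k=2s-1+t$ (equivalently $|L_i\cup T|=k+1$, so that the relevant discriminantal hyperplanes are indexed by $L_i\cup T$), which does not follow from the hypotheses you list; this gap, however, is inherited from the under-specification in the paper's own statement rather than introduced by your argument, and under that reading the rest of your proof is sound.
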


\begin{rem} 
For a fixed good $3s$-partition $\mathbb{T}$, equation $p_\mathbb{T}(a_{ij})=0$ corresponds to $n \choose 3s$ $3s\choose s$ nonlinear relations on Pl\"ucker coordinates $\beta _I$, $(2s-1) \times (2s-1)$ minors of the matrix $A=(a_{ij})$. \\
On the other hand $\widetilde{p}_{\mathbb{T}, T}(a_{ij}) =0$  is equivalent to vanishing of $(2s-1) \times (2s-1)$ minors of the matrix with rows given by solutions of system $A_I \cdot x =0$, $A_{I} = (a_{ij})_{i \in I}$, i.e. 
$n \choose 3s$ $3s-3 \choose 2s-1$ equations on $a_{ij}$.  That is $\widetilde{p}_{\mathbb{T}, T}(a_{ij}) =0$ is reduced form of $p_\mathbb{T}(a_{ij})=0$.    
\end{rem}


\section{Hypersurfaces in complex Grassmannian $Gr(3,n)$}\label{grassmannian}

Let now $\A$ be a generic arrangement of  $6$ hyperplanes in $\mathbb{C}^3$ (i.e. the example in Section \ref{thmU63} in $\mathbb{C}^3$ instead of $\mathbb{R}^3$) and
\begin{equation}
A=
\begin{pmatrix}
a_{11} & a_{12} & a_{13} \\
\vdots & \vdots & \vdots \\
a_{61} & a_{62} & a_{63}
\end{pmatrix}
\end{equation}
be the matrix having in each row normal vectors $\alpha_i$ to hyperplanes $H_i^0 \in \A$. Since $\A$ is generic, columns of $A$ are independent vectors in $\CC^6$ and they span a subspace of dimension $3$ in $\mathbb{C}^6$, i.e. an element in the Grassmannian $Gr(3,6)$. The non null $3 \times 3$ minors of $A$ are Pl\"{u}cker coordinates  $\beta_{ijk}$ and the matrix $A(\mathcal{A}_\infty)$ is the matrix of the map
\begin{equation*}
\begin{split}
\varphi_x : \mathbb{C}^6 &\to \bigwedge^4 \mathbb{C}^6 \\
v & \mapsto v \wedge x,
\end{split}
\end{equation*}
where 
$x = \sum_{1 \leq i < j < k \leq n} \beta_{i j k} (e_{i} \wedge e_{j} \wedge e_{k})$. If $\A_\infty$ is dependent then $\beta_{ijk}$ have to satisfy both, classical Pl\"{u}cker relations and relations in equation (\ref{rel:beta3}) (notice that since relations in equation (\ref{rel:beta3}) come directly from condition $\rank A_\mathbb{T}(\mathcal{A}_\infty)=2$, we get exactly same relations in real and complex case) . The latter can be simplified as:
\begin{center}
$(I): \,
  \begin{cases}
   (a) : \beta_{134}\beta_{256} - \beta_{234}\beta_{156} = 0 \\
   (b) : \beta_{124}\beta_{356} - \beta_{123}\beta_{456} = 0 \\
   (c) : \beta_{125}\beta_{346} - \beta_{126}\beta_{345} = 0
  \end{cases}
$
and \, \, \,
$(II): \,
  \begin{cases}
   (d) : \beta_{234}\beta_{126}\beta_{456} + \beta_{124}\beta_{256}\beta_{346} = 0 \\
   (e) : \beta_{234}\beta_{125}\beta_{456} + \beta_{124}\beta_{256}\beta_{345} = 0 \\
   (f) : \beta_{234}\beta_{126}\beta_{356} + \beta_{123}\beta_{256}\beta_{346} = 0 \\
   (g) : \beta_{234}\beta_{125}\beta_{356} + \beta_{123}\beta_{256}\beta_{345} = 0 \\
   (h) : \beta_{134}\beta_{126}\beta_{456} + \beta_{124}\beta_{156}\beta_{346} = 0 \\
   (i) : \beta_{134}\beta_{125}\beta_{456} + \beta_{124}\beta_{156}\beta_{345} = 0 \\
   (j) : \beta_{134}\beta_{126}\beta_{356} + \beta_{123}\beta_{156}\beta_{346} = 0 \\
   (k) : \beta_{134}\beta_{125}\beta_{356} + \beta_{123}\beta_{156}\beta_{345} = 0  \quad .
  \end{cases}
$
\end{center}

\bigskip
\noindent
Where equation $(I) (a)$ is obtained dividing the first four equations in system $(I)$ in (\ref{rel:beta3}) respectively by $-\beta_{456}, \beta_{356}, -\beta_{346}, \beta_{345}\neq 0$ and, similarly, equations $(I) (b)$ and $(c)$ are obtained dividing, respectively, equations from $5$ to $8$ and equations from $9$ to $12$  in system $(I)$ in (\ref{rel:beta3}) by, respectively, $-\beta_{256}, \beta_{156}, -\beta_{126}, \beta_{125}\neq 0$ and $-\beta_{234}, \beta_{134}, -\beta_{124}, \beta_{123}\neq 0$.  While equations in $(II)$ (\ref{rel:beta3}) are left unchanged except for a change of sign. Remark that this is only possible since $\A$ is a generic arrangement which implies that all $\beta_{ijk} \neq 0$ and hence we can divide equations in (\ref{rel:beta3}) $(I)$ opportunely by them. 
In the following we refer to equations in $(I)$ and $(II)$ by using corresponding letters, for example $(a)$ will refers to equation $\beta_{134}\beta_{256} - \beta_{234}\beta_{156}$.  Pl\"{u}cker relations in equation (\ref{pluck}) for $k=3$ becomes:
$$ \beta_{i_1 i_2 k_0}\beta_{k_1 k_2 k_3} - \beta_{i_1 i_2 k_1}\beta_{k_0 k_2 k_3} + \beta_{i_1 i_2 k_2}\beta_{k_0 k_1 k_3} - \beta_{i_1 i_2 k_3}\beta_{k_0 k_1 k_2} = 0 .$$
Fixing $i_1 =1, i_2=2, k_0=4, k_1=3, k_2=5, k_3=6$, we obtain $$\beta_{124}\beta_{356} - \beta_{123}\beta_{456} + \beta_{125}\beta_{436} - \beta_{126}\beta_{435} = 0, $$
that is $(b) = (c)$, and fixing $i_1 =5, i_2=6, k_0=2, k_1=1, k_2=3, k_3=4$ we get $(a) = (b)$. That is relations in (I) are equivalent.\\
Next we focus on type (II) relations and vanishing of all $4 \times 4$ minors of Pl\"{u}cker matrix . Fixed a good 6-partition $\mathbb{T} = \{L_1, L_2, L_3\}$, for any subset $L_4 \subset [6]$ of cardinality $4$ such that $L_4 \notin \mathbb{T}$, define the submatrix 
\begin{equation}
Pl_{\mathbb{T}}(D_{L_4}) = ( \alpha_{L_i} )_{1 \leq i \le 4} .
\end{equation} 
of $A(\mathcal{A}_\infty)$. The matrix $Pl_{\mathbb{T}}(D_{L_4})$ is obtained adding one row to the matrix $A_\mathbb{T}(\mathcal{A}_\infty)$. Hence since relations in equation (\ref{rel:beta3})
correspond to vanishing of $3 \times 3$ minors of $A_\mathbb{T}(\mathcal{A}_\infty)$, $\mathbb{T}=\{\{ 1, 2, 3, 4 \}, \{ 1, 2, 5, 6 \},\{ 3, 4, 5, 6 \} \}$, then zero of $4 \times 4$ minors of matrix $Pl_{\mathbb{T}}(D_{L_4})$ for same fixed $\mathbb{T}$ naturally give rise to relations among  relations in (\ref{rel:beta3}).  For example $(d)=0$ and $(e)=0$ correspond to vanishing of minors obtained considering, respectively, 1st, 3rd and 5th columns and 1st, 3rd and 6th columns of $A_\mathbb{T}(\mathcal{A}_\infty)$. Adding to $A_{\mathbb{T}}(\mathcal{A}_\infty)$ the normal vector to the hyperplane $D_{\{ 2,4,5,6 \}}$ as 4th row we get 
\begin{center}
$Pl_{\mathbb{T}}(D_{\{ 2,4,5,6 \}})=$
$\begin{pmatrix}
-\beta_{234} & \beta_{134} & -\beta_{124} & \beta_{123} & 0 & 0 \\
-\beta_{256} & \beta_{156} & 0 & 0 & -\beta_{126} & \beta_{125} \\
0 & 0 & -\beta_{456} & \beta_{356} & -\beta_{346} & \beta_{345} \\
0 & -\beta_{456} & 0 & \beta_{256} & -\beta_{246} & \beta_{245}
\end{pmatrix}$
\end{center}
and calculating the determinant of submatrix obtained by 1st, 3rd, 5th and 6th columns we get the relation among (e) and (d) : \begin{equation}\label{eq:syz1}
\beta_{246} \cdot (e) - \beta_{245} \cdot (d) =0 \quad .
\end{equation} 
Anagously vanishing of minor obtained by 1st, 4th, 5th and 6th columns gives: 
\begin{equation}\label{eq:syz2}
\beta_{256}\beta_{234} \cdot (c) - \beta_{246} \cdot (g) + \beta_{245} \cdot (f) =0 \quad .
\end{equation} 
Applying similar considerations to opportunely chosen $L_4 \notin \mathbb{T}$ we get the following additional syzygies. \\
Vanishing of minors obtained considering 1st, 4th, 5th and 6th columns and 1st, 3rd, 5th and 6th columns of 
\begin{equation*}
Pl_{\mathbb{T}}(D_{\{ 2,3,5,6 \}})=
\begin{pmatrix}
-\beta_{234} & \beta_{134} & -\beta_{124} & \beta_{123} & 0 & 0 \\
-\beta_{256} & \beta_{156} & 0 & 0 & -\beta_{126} & \beta_{125} \\
0 & 0 & -\beta_{456} & \beta_{356} & -\beta_{346} & \beta_{345} \\
0 & -\beta_{356} & \beta_{256} & 0 & -\beta_{236} & \beta_{235}
\end{pmatrix}
\end{equation*}
correspond, respectively, to relations $\beta_{236} \cdot (g) - \beta_{235} \cdot (f) =0$ and $\beta_{256}\beta_{234} \cdot (c) + \beta_{236} \cdot (e) - \beta_{235} \cdot (d) =0$. Those relations, jointly with the one in equations (\ref{eq:syz1}) and (\ref{eq:syz2}), state dependency of $(d), (e), (f)$ and $(g)$ from $(c)$ which, in turn, is equivalent to $(a)$, i.e. they are all zero if and only if $(a)$ is zero.\\
By vanishing of minors given by 2nd, 3rd, 5th and 6th columns and 2nd, 4th, 5th and 6th columns of submatrix
\begin{equation*}
Pl_{\mathbb{T}}(D_{\{ 1,4,5,6 \}})=
\begin{pmatrix}
-\beta_{234} & \beta_{134} & -\beta_{124} & \beta_{123} & 0 & 0 \\
-\beta_{256} & \beta_{156} & 0 & 0 & -\beta_{126} & \beta_{125} \\
0 & 0 & -\beta_{456} & \beta_{356} & -\beta_{346} & \beta_{345} \\
-\beta_{456} & 0 & 0 & \beta_{156} & -\beta_{146} & \beta_{145}
\end{pmatrix}
\end{equation*}
we get, respectively, $\beta_{146} \cdot (i) - \beta_{145} \cdot (h) =0$ and   $\beta_{156}\beta_{134} \cdot (c) - \beta_{146} \cdot (k) + \beta_{145} \cdot (j) =0$.  \\
Finally vanishing of minors given by 2nd, 4th, 5th and 6th columns and 2nd, 3rd, 5th and 6th columns of
\begin{equation*}
Pl_{\mathbb{T}}(D_{\{ 1,3,5,6 \}})=
\begin{pmatrix}
-\beta_{234} & \beta_{134} & -\beta_{124} & \beta_{123} & 0 & 0 \\
-\beta_{256} & \beta_{156} & 0 & 0 & -\beta_{126} & \beta_{125} \\
0 & 0 & -\beta_{456} & \beta_{356} & -\beta_{346} & \beta_{345} \\
-\beta_{356} & 0 & \beta_{156} & 0 & -\beta_{136} & \beta_{135}
\end{pmatrix}
\end{equation*}
give relations $\beta_{136} \cdot (k) - \beta_{135} \cdot (j) =0$ and $-\beta_{156}\beta_{134} \cdot (c) - \beta_{136} \cdot (i) + \beta_{135} \cdot (h) =0$. \\
That is relations in equation (\ref{rel:beta3}) are all equivalent and we are left with only one independent relation 
\begin{equation}
 (a)=0: \quad \beta_{134}\beta_{256} - \beta_{234}\beta_{156}=0.
\end{equation}
This degree 2 homogeneous polynomial defines a degree 2 hypersurface on the projective variety $Gr(3,6)$. \\
The above computations are a direct consequence of the following more general Lemma.

\begin{lem}\label{lem:van}
Let $A(\A_\infty)$ be the Pl\"{u}cker matrix associated to a generic arrangement $\A$ of $n$ hyperplanes in $\CC^3$ and $\TT$ a good 6-partition of indices $i_1, \ldots,  i_6 \in [n]$. If entries $\beta_I$ of the matrix $A(\A_\infty)$ satisfy Pl\"{u}cker relations, then $\rank A_\TT(\A_\infty) = 2$ if and only if one of its 3 $\times$ 3 minor vanishes.
\end{lem}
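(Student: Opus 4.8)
The plan is to reduce the general $n$ to the case $n=6$ treated in Section \ref{thmU63} and then read off the statement from the relations computed there. The forward implication is immediate: if $\rank A_\TT(\A_\infty)=2$ then every $3\times 3$ minor of the $3\times n$ matrix $A_\TT(\A_\infty)$ vanishes, so in particular one of them does. Moreover $\rank A_\TT(\A_\infty)\ge 2$ always, since for a generic $\A$ the rows $\alpha_{L_1},\alpha_{L_2}$ have distinct supports $L_1\neq L_2$ with nonzero entries, so choosing one column in $L_1\setminus L_2$ and one in $L_2\setminus L_1$ exhibits a nonzero $2\times 2$ minor; hence the only alternative to rank $2$ is rank $3$. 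The substance of the lemma is therefore the converse: modulo the Pl\"ucker relations, vanishing of a single $3\times 3$ minor forces every one of them to vanish.

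First I would record that $A_\TT(\A_\infty)$ is supported only on the six columns indexed by $\{i_1,\dots,i_6\}=\bigcup_{L\in\TT}L$, so each of its entries, hence each of its $3\times 3$ minors, is a polynomial in the Pl\"ucker coordinates $\beta_I$ with $I\subset\{i_1,\dots,i_6\}$. Let $A'$ be the $6\times 3$ submatrix of the arrangement matrix $A$ given by the rows indexed by $\{i_1,\dots,i_6\}$. Since $\A$ is generic, $A'$ has rank $3$, its maximal minors are exactly these $\beta_I$, and its column span is a point of $Gr(3,6)$; thus the $\beta_I$ with $I\subset\{i_1,\dots,i_6\}$ satisfy the Pl\"ucker relations of $Gr(3,6)$, namely the relations (\ref{pluck}) all of whose indices lie in $\{i_1,\dots,i_6\}$. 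After relabelling $\{i_1,\dots,i_6\}$ as $\{1,\dots,6\}$ we are exactly in the setting of the example $\mathcal{B}(6,3,\A_\infty)$, with $A_\TT(\A_\infty)$ the displayed $3\times 6$ matrix.

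Next I would identify the minors. By the block structure of the supports $L_1\cap L_2,\,L_1\cap L_3,\,L_2\cap L_3$, the generically nonzero $3\times 3$ minors of $A_\TT(\A_\infty)$ are, up to sign and a nonzero factor $\beta_J$, precisely the expressions $(a),(b),(c)$ in $(I)$ and $(d),\dots,(k)$ in $(II)$; since $\A$ is generic every such $\beta_J\neq 0$, so a minor vanishes if and only if the associated relation vanishes. It then suffices to show that, modulo the Pl\"ucker relations, vanishing of any one of $(a),\dots,(k)$ forces all of them to vanish. This is exactly the content of the computation preceding the lemma: the Pl\"ucker relation (\ref{pluck}) yields $(a)=(b)=(c)$, while $(d),\dots,(k)$ are tied to $(c)=(a)$ through the syzygies (\ref{eq:syz1}), (\ref{eq:syz2}) and their three analogues.

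The point that makes those syzygies available — and the step I expect to be the main obstacle — is the vanishing of all $4\times 4$ minors of the Pl\"ucker matrix. Once we have reduced to $n=6$ this is immediate from the Pl\"ucker embedding: imposing the Pl\"ucker relations means $\dim\ker\varphi_x=3$, so $A(\A_\infty)=M_x$ has rank $6-3=3$ and all its $4\times 4$ minors are zero; in particular those of the augmented matrices $Pl_\TT(D_{L_4})=(\alpha_{L_i})_{1\le i\le 4}$ vanish. Expanding each such vanishing $4\times 4$ minor along the adjoined row $\alpha_{L_4}$ produces precisely the linear syzygies among $(a),\dots,(k)$ recorded above, which collapse the whole system to the single independent equation $(a)=0$. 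Hence a single vanishing $3\times 3$ minor forces $(a)=0$, so all $3\times 3$ minors of $A_\TT(\A_\infty)$ vanish and $\rank A_\TT(\A_\infty)\le 2$; combined with $\rank A_\TT(\A_\infty)\ge 2$ this gives $\rank A_\TT(\A_\infty)=2$, establishing the converse.
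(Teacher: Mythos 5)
Your overall strategy is the paper's: reduce to the six columns $\{i_1,\dots,i_6\}$, observe that the relevant $4\times 4$ minors of the Pl\"ucker matrix vanish, adjoin a fourth row $\alpha_{L_4}$ to $A_\TT(\A_\infty)$ and expand the vanishing $4\times 4$ minor along it to get linear relations among the $3\times 3$ minors. Your forward direction, your remark that $\rank A_\TT(\A_\infty)\ge 2$ always holds, and your explicit reduction to $n=6$ (which is what actually justifies ``all $4\times 4$ minors vanish,'' since for $n>6$ the full matrix $M_x$ has rank $n-3>3$) are correct and in places more careful than the paper itself.

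The gap is in the propagation step. You close the argument by appealing to the syzygies recorded before the lemma and asserting that they ``collapse the whole system to the single independent equation $(a)=0$.'' Those particular syzygies do not obviously do this: for instance, once $(c)=0$ is known, the two recorded relations $\beta_{256}\beta_{234}\cdot(c)-\beta_{246}\cdot(g)+\beta_{245}\cdot(f)=0$ and $\beta_{236}\cdot(g)-\beta_{235}\cdot(f)=0$ form a homogeneous $2\times 2$ linear system in $(f),(g)$, and they force $(f)=(g)=0$ only if the determinant $\beta_{235}\beta_{246}-\beta_{236}\beta_{245}$ is nonzero --- which needs a further Pl\"ucker relation (it equals $\beta_{234}\beta_{256}\neq 0$) that you never invoke; the same issue recurs for $(h),(i),(j),(k)$. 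The paper's proof of the lemma avoids this entirely by a better choice of augmenting row: for any two $3\times 3$ minors $M_i,M_j$ of $A_\TT(\A_\infty)$ whose column sets are contained in a common $4$-set $\{s_1,\dots,s_4\}\subset\{i_1,\dots,i_6\}$, it adjoins the row $\alpha_L$ with $L=\{s_i,s_j,s_5,s_6\}$, so that the restriction of $\alpha_L$ to the columns $\{s_1,\dots,s_4\}$ has exactly two nonzero entries and the expansion of the vanishing $4\times 4$ minor reads $\beta_{L\setminus\{s_i\}}M_i\pm\beta_{L\setminus\{s_j\}}M_j=0$ with both coefficients nonzero; hence $M_i=0$ if and only if $M_j=0$, and transitivity along one-element swaps of $3$-subsets of $\{i_1,\dots,i_6\}$ propagates the vanishing of a single minor to all twenty. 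Replacing your appeal to the recorded syzygies by this uniform two-term argument (or verifying the missing determinant nonvanishing via the extra Pl\"ucker relation) closes the gap.
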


\begin{proof}
$\Rightarrow$) Since $\rank A_\TT(\A_\infty) = 2$ if and only if all 3 $\times$ 3 minors of $A_\TT(\A_\infty)$ vanish, it is obvious.

$\Leftarrow$) Entries $\beta_I$ of $A(\A_\infty)$ satisfy Pl\"{u}cker relations if and only if any $4 \times 4$ minor in $A(\A_\infty)$ vanishes. For any 4 columns $ s_1 < s_2 < s_3 < s_4  \in \{i_1, \ldots, i_6\}$ of matrix $A(\A_\infty)$ let $M_i$ and $M_j$ be the two  $3 \times 3$  minors in $A_\TT(\A_\infty)$  obtained considering, respectively, columns $\{ s_1, s_2 , s_3 , s_4 \} \backslash \{s_i\}$ and $\{ s_1 , s_2 , s_3 , s_4 \} \backslash \{s_j\}$. If we add to submatrix $A_\TT(\A_\infty)$ the row of $A(\A_\infty)$ corresponding to vector $\alpha_L$, $L = \{ s_i, s_j, s_5, s_6 \}$, with $\{ s_5, s_6 \} = \{i_1, \ldots, i_6\}\backslash \{ s_1, s_2, s_3, s_4 \}$, then the $4 \times 4$ minor of the matrix $\begin{pmatrix}
A_\TT(\A_\infty) \\
\alpha_L
\end{pmatrix}$
obtained considering columns $\{ s_1, s_2, s_3, s_4 \}$ vanishes, that is 
\begin{equation}
\beta_{L \backslash \{s_i\}} M_i \pm \beta_{L \backslash \{s_j\}} M_j = 0
\end{equation}
where $\beta_{L \backslash \{s_t\}}$ is the entry of the row $\alpha_L$ in the column $s_t, t=i,j$. Dividing by  $\beta_{L \backslash \{s_i\}} \neq 0$ (entries of $A(\A_\infty)$ are all not zero by $\A$ generic) we get 
\begin{equation}
M_i = \pm M_j \cdot \cfrac{\beta_{L \backslash \{s_j\}}}{\beta_{L \backslash \{s_i\}}}
\end{equation}
that is $M_i = 0$ if and only if $M_j =0$. Applying the above considerations to any subset $\{ s_1 < s_2 < s_3 < s_4 \} \subset \{i_1, \ldots, i_6\}$ and transitivity of equality, we get that if a 3 $\times$ 3 minor of $A(\A_\infty)$ vanishes then all minors vanish.
\end{proof}

\begin{rem}\label{rem:gen}
Recall that if $\A$ is an arrangement of $n$ hyperplanes in $\CC^3$ then the matrix $A(\mathcal{A}_\infty)$ is an ${n \choose 4} \times n$ matrix such that for any $L= \{s_1< s_2 < s_3 < s_4\}$, entries $(x_1, \ldots , x_n)$ of row vector $\alpha_L$ are all zeros except $x_{i_j}=(-1)^{j}\beta_{I_j}, I_j=L\setminus \{s_j\}, j=1,\ldots,4$. Hence for any fixed $6$ indices $s_1 < \ldots <s_6 \in [n]$ we get a ${6 \choose 4}\times 6$ submatrix of $A(\mathcal{A}_\infty)$ obtained considering all rows $\alpha_L$, $L \subset \{s_1, \ldots, s_6\}, \mid L\mid =4$ and columns $\{s_1, \ldots, s_6\}$ ( all columns $j \notin \{s_1, \ldots, s_6\}$ of the matrix $(\alpha_L)_{L \subset \{s_1, \ldots, s_6\}, \mid L\mid =4}$ are zero). It follows that the general case of $n$ hyperplanes in $\CC^3$ essentially reduce to the case $n=6$. \\
On the other hand it is an easy remark that, if $s_1 < \ldots <s_6 \in [n]$ are $6$ fixed indices and $\mathbb{T} = \{ \{s_1, s_2, s_3, s_4\}$, $\{s_1, s_2, s_5, s_6\}, \{s_3, s_4, s_5, s_6\} \}$ ( analogous of good $6$-partition $\{ \{1, 2, 3, 4\}$, $\{1, 2, 5, 6\}, \{3, 4, 5, 6\} \}$ of indices $\{1, \ldots 6\}$ ), then any other good $6$-partition on indices $\{s_1 , \ldots , s_6\}$ is of the form
\begin{equation}\label{eq:notT}
\sigma.\mathbb{T}=\{ \{i_1, i_2, i_3, i_4\}, \{i_1, i_2, i_5, i_6\}, \{i_3, i_4, i_5, i_6\} \}
\end{equation}
where $i_j=\sigma(s_j)$, $\sigma \in \mathbf{S_6}$, $\mathbf{S_6}$ being the group of all permutations of indices $\{s_1, \ldots , s_6\}$. Notice that in general $i_j$ are not ordered and we can have $i_j > i_{j+1}$.
\end{rem}

The following Lemma holds.

\begin{lem}\label{lem:fin} Let $\A$ be an arrangement of $n$ hyperplanes in $\CC^3$ and $\sigma.\mathbb{T} = \{ \{i_1, i_2, i_3, i_4\}$, $\{i_1, i_2, i_5, i_6\}, \{i_3, i_4, i_5, i_6\} \}$, a good $6$-partition of indices $s_1 < \ldots <s_6 \in [n]$ such that $\rank A_{\sigma.\mathbb{T}}(\mathcal{A}_\infty)=2$ then $\A$ is a point in the hypersurface 
\begin{equation}
 \beta_{i_1 i_3 i_4} \beta_{i_2 i_5 i_6} - \beta_{i_2 i_3 i_4} \beta_{i_1 i_5 i_6} =0 \quad .
\end{equation}
\end{lem}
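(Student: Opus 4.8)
The plan is to reduce the statement to the six-element index set $\{i_1,\ldots,i_6\}$ and then read off the required quadric as a factored $3\times 3$ minor. First I would invoke Remark \ref{rem:gen}: each of the rows $\alpha_{L_1},\alpha_{L_2},\alpha_{L_3}$ of $A_{\sigma.\mathbb{T}}(\mathcal{A}_\infty)$ is supported only on the columns $\{s_1,\ldots,s_6\}=\{i_1,\ldots,i_6\}$, and every Pl\"{u}cker coordinate that appears involves only indices from this set. Hence I may discard all other columns and work with the $3\times 6$ submatrix, so that the whole question is identical to the case $n=6$.

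Next I would exploit the block shape forced by a good $6$-partition. By equation (\ref{eq:normvec}) the entry of $\alpha_L$ in column $s_p$ equals, up to sign, $\beta_{L\setminus\{s_p\}}$, and it is $0$ whenever the column index does not belong to $L$. Since $L_3=\{i_3,i_4,i_5,i_6\}$ contains neither $i_1$ nor $i_2$, the row $\alpha_{L_3}$ vanishes in columns $i_1$ and $i_2$, whereas $L_1$ and $L_2$ both contain $i_1,i_2$. Consequently, for any $c\in\{i_3,i_4,i_5,i_6\}$ the $3\times 3$ minor of $A_{\sigma.\mathbb{T}}(\mathcal{A}_\infty)$ on the columns $\{i_1,i_2,c\}$ expands along its third row and factors as the entry of $\alpha_{L_3}$ in column $c$ times the $2\times 2$ minor of $\alpha_{L_1},\alpha_{L_2}$ on columns $i_1,i_2$. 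Computing this $2\times 2$ minor from $L_1\setminus\{i_1\}=\{i_2,i_3,i_4\}$, $L_1\setminus\{i_2\}=\{i_1,i_3,i_4\}$, $L_2\setminus\{i_1\}=\{i_2,i_5,i_6\}$, $L_2\setminus\{i_2\}=\{i_1,i_5,i_6\}$ gives, up to an overall sign, exactly $\beta_{i_1 i_3 i_4}\beta_{i_2 i_5 i_6}-\beta_{i_2 i_3 i_4}\beta_{i_1 i_5 i_6}$.

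To conclude, I would apply the hypothesis $\rank A_{\sigma.\mathbb{T}}(\mathcal{A}_\infty)=2$, which forces every $3\times 3$ minor to vanish; in particular the minors on the columns $\{i_1,i_2,c\}$ above are all zero. Since $\alpha_{L_3}$ is a nonzero vector (it is the normal to $D_{L_3}$, cf.\ equation (\ref{eq:normvec})), at least one of its entries in columns $i_3,i_4,i_5,i_6$ is nonzero; choosing such a column $c$ and dividing the vanishing minor by this nonzero factor leaves precisely $\beta_{i_1 i_3 i_4}\beta_{i_2 i_5 i_6}-\beta_{i_2 i_3 i_4}\beta_{i_1 i_5 i_6}=0$. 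As the $\beta_I$ are the Pl\"{u}cker coordinates of $\A$, this is exactly the statement that $\A$ lies on the asserted degree $2$ hypersurface in $Gr(3,n)$.

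I expect the only real delicacy to be the sign bookkeeping in the second step: because $\sigma$ may list $i_1,\ldots,i_6$ out of their natural order, the signs $(-1)^p$ attached to the entries $\beta_{L\setminus\{s_p\}}$ in (\ref{eq:normvec}) must be tracked carefully to confirm that the $2\times 2$ minor is the stated quadric up to a harmless overall sign. Everything else is routine: the reduction via Remark \ref{rem:gen}, a single cofactor expansion, and the trivial observation that $\rank A_{\sigma.\mathbb{T}}(\mathcal{A}_\infty)=2$ kills all $3\times 3$ minors. In contrast to the full equivalence of Lemma \ref{lem:van}, this direction does not require the Pl\"{u}cker relations at all; it follows purely from the rank hypothesis.
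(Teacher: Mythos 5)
Your skeleton is essentially the computation the paper performs: restrict attention to the six relevant columns (Remark \ref{rem:gen}), observe that $\alpha_{L_3}$ vanishes on the two columns $i_1,i_2$ common to $L_1$ and $L_2$, expand the $3\times3$ minor on columns $\{i_1,i_2,c\}$ along the third row, and divide by the nonzero entry of $\alpha_{L_3}$ (in fact all entries $\beta_I$ are nonzero by genericity, so every $c$ works). You are also right that for this one-directional implication one only needs that $\rank A_{\sigma.\mathbb{T}}(\mathcal{A}_\infty)=2$ kills all $3\times3$ minors, so neither Lemma \ref{lem:van} nor the Pl\"ucker relations are strictly needed here (the paper invokes Lemma \ref{lem:van} because it wants the ``if and only if'' for Theorem \ref{thm:gr}).

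The gap is precisely in the step you flag and then dismiss as ``a harmless overall sign'': it is not an overall sign. Writing the entries of $A_{\sigma.\mathbb{T}}(\mathcal{A}_\infty)$ via (\ref{eq:normvec}) requires sorting each $L_j$, and the positions of $i_1$ and $i_2$ inside the sorted $L_1$ and the sorted $L_2$ need not have the same relative parity; hence the two monomials of your $2\times2$ minor can acquire \emph{different} signs relative to the sorted Pl\"ucker coordinates. Concretely, take $(i_1,\dots,i_6)=(1,3,2,6,4,5)$, so $L_1=\{1,2,3,6\}$, $L_2=\{1,3,4,5\}$, $L_3=\{2,4,5,6\}$. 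On columns $1,3$ the rows $\alpha_{L_1},\alpha_{L_2}$ read $(-\beta_{236},-\beta_{126})$ and $(-\beta_{345},\beta_{145})$, so the $2\times2$ minor is $-(\beta_{236}\beta_{145}+\beta_{126}\beta_{345})$, with a \emph{plus} between the monomials, whereas the displayed quadric read with sorted coordinates would be $\beta_{126}\beta_{345}-\beta_{236}\beta_{145}$. The two coincide only if $\beta_{i_2i_3i_4}=\beta_{326}$ is understood as the order-sensitive determinant $\det(\alpha_3,\alpha_2,\alpha_6)=-\beta_{236}$. Making that convention precise is the actual content of the paper's proof: it replaces $A_{\sigma.\mathbb{T}}(\mathcal{A}_\infty)$ by the column-permuted, $\sign(\tau_i)$-rescaled matrix $\sigma.A_{\mathbb{T}}$ whose entries are determinants taken in the $\sigma$-order, checks that this changes neither the rank nor the vanishing of any minor, and only then reads off the quadric. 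Your argument needs this normalization (or an explicit parity computation) to land on the equation as stated; as written it only yields that $\mathcal{A}$ lies on $\epsilon_1\beta_{\{i_1,i_3,i_4\}}\beta_{\{i_2,i_5,i_6\}}-\epsilon_2\beta_{\{i_2,i_3,i_4\}}\beta_{\{i_1,i_5,i_6\}}=0$ for some $\sigma$-dependent signs $\epsilon_1,\epsilon_2\in\{\pm1\}$.
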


\begin{proof} Let $\sigma.\mathbb{T} = \{ L_1'=\{i_1, i_2, i_3, i_4\}, L_2'=\{i_1, i_2, i_5, i_6\}, L_3'=\{i_3, i_4, i_5, i_6\} \}$ be a good $6$-partition of indices $s_1 < \ldots <s_6 \in [n]$ and denote by $(L_1')=(i_1, i_2, i_3, i_4)$, $(L_2')=(i_1, i_2, i_5, i_6)$ and $(L_3')=(i_3, i_4, i_5, i_6)$ the ordered $4$-uples of indices. Then there exist unique permutations $\tau_i$, $i=1,2,3$ of indices $s_1 < \ldots <s_6$ such that $\tau_i$ fixes indices outside $L_i'$ and, if $L_i'=\{s_{j_1}< s_{j_2}< s_{j_3}< s_{j_4}\} $, then $(L_i')=\tau_i.L_i'=(\tau_i(s_{j_1}),\tau_i(s_{j_2}),\tau_i(s_{j_3}),\tau_i(s_{j_4}))$, $i=1,2,3$. By determinant rule on permutations of columns we have that 
\begin{eqnarray*}
\sum_{j=1}^4 (-1)^j \det (\alpha_{\tau(1)}, \dots, \hat{\alpha_{\tau(j)}}, \dots, \alpha_{\tau(4)})e_{\tau(j)} &=& 
    \begin{vmatrix}
      a_{\tau(1)1} & a_{\tau(2)1} & a_{\tau(3)1} & a_{\tau(4)1} \\
      a_{\tau(1)2} & a_{\tau(2)2} & a_{\tau(3)2} & a_{\tau(4)2} \\
      a_{\tau(1)3} & a_{\tau(2)3} & a_{\tau(3)3} & a_{\tau(4)3} \\
       e_{\tau(1)} &  e_{\tau(2)}  &  e_{\tau(3)}  &   e_{\tau(4)}
    \end{vmatrix} \\
&=& \sign(\tau)
    \begin{vmatrix}
      a_{11} & a_{21} & a_{31} & a_{41} \\
      a_{12} & a_{22} & a_{32} & a_{42} \\
      a_{13} & a_{23} & a_{33} & a_{43} \\
      e_{1} &  e_{2}  &  e_{3}  &   e_{4}
    \end{vmatrix} \\
&=& \sign(\tau)\sum_{j=1}^4 (-1)^j \det (\alpha_{1}, \dots, \hat{\alpha_{j}}, \dots, \alpha_{4})e_{j} \quad.
\end{eqnarray*}
Hence, if we define the matrix $\sigma.A_\TT$ as the matrix having in its rows respectively the coefficients of the three vectors 
\begin{eqnarray*}
\tau_1.\alpha_{L_1'} &=& \sum_{j=1}^4 (-1)^j \det (\alpha_{i_1}, \dots, \hat{\alpha_{i_j}}, \dots, \alpha_{i_4})e_{i_j}, \\
\tau_2.\alpha_{L_2'} &=& \sum_{j \in \{1,2,5,6\}} (-1)^j \det (\alpha_{i_1}, \dots, \hat{ \alpha_{i_j}}, \dots, \alpha_{i_6})e_{i_j}, \\
\tau_3.\alpha_{L_3'} &=& \sum_{j=3}^6 (-1)^j \det (\alpha_{i_3}, \dots, \hat{\alpha_{i_j}}, \dots, \alpha_{i_6})e_{i_j}
\end{eqnarray*}
with respect to the ordered basis $\{e_{i_1}, \ldots, e_{i_6}\}$, then $i$-th row of $\sigma .A_\TT$ is obtained from $i$-th row of $A_{\sigma.\mathbb{T}}(\mathcal{A}_\infty)$ by permutation $\sigma$ of columns and multiplication by $\sign(\tau_i)$ (notice that $\sigma_{\mid_{L_i'}}=\tau_i$ ). That is $\rank \sigma .A_\TT= \rank A_{\sigma.\mathbb{T}}(\mathcal{A}_\infty)$ and, more in details, the $3 \times 3$ minor given by columns $\{i,j,k\}$ in $A_{\sigma.\mathbb{T}}(\mathcal{A}_\infty)$ vanishes if and only if the  $3 \times 3$ minor of columns $\{\sigma(i),\sigma(j),\sigma(k)\}$ in $\sigma.A_\TT$ vanishes. Hence, by Lemma \ref{lem:van}  $\rank A_{\sigma.\mathbb{T}}(\mathcal{A}_\infty)=\rank \sigma .A_\TT=2$  if and only if one minor vanishes. In particular the first three columns $\{i_1,i_2,i_3\}$ in $\sigma .A_\TT$ are of the form
\begin{equation*}
\begin{pmatrix}
-\beta_{i_2 i_3 i_4} \\
-\beta_{i_2 i_5 i_6}  \\
0 
\end{pmatrix} \qquad
\begin{pmatrix}
\beta_{i_1 i_3 i_ 4}\\
\beta_{i_1 i_5 i_6}\\
0
\end{pmatrix} \qquad
\begin{pmatrix}
-\beta_{i_1 i_2 i_4}\\
0\\
-\beta_{i_4 i_5 i_6}
\end{pmatrix}
\end{equation*}
from which we get that the $3 \times 3$ minor corresponding to them vanishes if and only if $$\beta_{i_1 i_3 i_4} \beta_{i_2 i_5 i_6} - \beta_{i_2 i_3 i_4} \beta_{i_1 i_5 i_6} =0$$ 
( recall that all entries $\beta_I $ in the matrix $A(\mathcal{A}_\infty)$\ verify $\beta_I \neq 0$ ). 
\end{proof}

By Remark \ref{rem:gen} and Lemma \ref{lem:fin}, the following main Theorem follows.

\begin{thm}\label{thm:gr}
The set of generic arrangements $\A$ of $n$ hyperplanes in $\CC^3$ that contains a dependent sub-arrangement is the set of points in an hypersurface in Grassmannian $Gr(3, n)$ such that each component is intersection of Grassmannian with a quadric.
\end{thm}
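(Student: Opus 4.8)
The plan is to reduce the theorem to the two lemmas already proved, \ref{lem:fin} and \ref{lem:van}, together with Remark \ref{rem:gen}, and to assemble them into a statement about the whole Grassmannian $Gr(3,n)$ rather than the single six-index case. First I would recall that by definition a generic arrangement $\A$ of $n$ hyperplanes in $\CC^3$ contains a dependent sub-arrangement if and only if there exist six indices $s_1 < \dots < s_6 \in [n]$ and a good $6$-partition $\sigma.\mathbb{T}$ of those indices for which the trace at infinity of the corresponding restriction is dependent; by Theorem \ref{thm:Bn3} and the surrounding remarks, this is equivalent to $\rank A_{\sigma.\mathbb{T}}(\mathcal{A}_\infty) = 2$. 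Thus the locus we want to describe is the union, over all choices of six indices and all good $6$-partitions on them, of the loci where the relevant rank drops.

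Next I would invoke Lemma \ref{lem:fin}: for each such choice the rank-$2$ condition is, \emph{given that the Pl\"{u}cker relations already hold}, equivalent to the single quadratic equation $\beta_{i_1 i_3 i_4}\beta_{i_2 i_5 i_6} - \beta_{i_2 i_3 i_4}\beta_{i_1 i_5 i_6} = 0$ in the Pl\"{u}cker coordinates. The point is that a generic arrangement is automatically a point of $Gr(3,n)$ (its normal vectors span a $3$-space in $\CC^n$, as explained at the start of Section \ref{grassmannian}), so its Pl\"{u}cker coordinates satisfy the Pl\"{u}cker relations for free, and Lemma \ref{lem:fin} applies without extra hypotheses. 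Each good $6$-partition therefore cuts out, inside $Gr(3,n) \subset \mathbb{P}(\bigwedge^3 \CC^n)$, the intersection of the Grassmannian with one quadric, namely the degree $2$ hypersurface defined by that single Pl\"{u}cker-coordinate quadratic. Using Remark \ref{rem:gen} I would note that running over all $\sigma \in \mathbf{S_6}$ and all $\binom{n}{6}$ choices of six indices produces a finite list of such quadrics, so the full locus of arrangements containing a dependent sub-arrangement is the union of finitely many components, each of which is the intersection of $Gr(3,n)$ with a quadric. This is exactly the asserted hypersurface structure.

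The one genuine subtlety, and the step I would be most careful about, is passing from ``each good $6$-partition on fixed indices gives one quadric'' to a clean global statement, because the indices $i_j = \sigma(s_j)$ in \eqref{eq:notT} need not be increasing. I would use the determinant/sign bookkeeping already carried out in the proof of Lemma \ref{lem:fin} (the $\sign(\tau_i)$ factors) to check that the quadric $\beta_{i_1 i_3 i_4}\beta_{i_2 i_5 i_6} - \beta_{i_2 i_3 i_4}\beta_{i_1 i_5 i_6}$, after reordering indices into increasing form, is a well-defined (up to sign, hence up to the quadric it cuts out) quadratic form in the ordered Pl\"{u}cker coordinates $\beta_I$; the sign ambiguity does not affect the vanishing locus. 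Once this is verified the components are genuinely quadric sections of $Gr(3,n)$, and no higher-degree relations survive, precisely because Lemma \ref{lem:van} has already collapsed all the type $(I)$ and type $(II)$ relations of \eqref{rel:beta3} down to the single relation $(a)$ modulo the Pl\"{u}cker relations.

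I do not expect any deep new difficulty beyond this packaging: the hard analytic work is entirely contained in Lemmas \ref{lem:van} and \ref{lem:fin}, and the remaining task is the combinatorial one of indexing the components correctly and confirming that each is cut from $Gr(3,n)$ by a single quadric. The proof would therefore read as a short deduction: invoke Remark \ref{rem:gen} to reduce to six indices, apply Lemma \ref{lem:fin} to get one quadric per good $6$-partition, and take the union over all six-index subsets and all good $6$-partitions to conclude that the arrangements with a dependent sub-arrangement form a hypersurface in $Gr(3,n)$ each of whose components is a quadric section.
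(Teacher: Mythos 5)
Your proposal is correct and follows essentially the same route as the paper, whose proof of Theorem \ref{thm:gr} is simply the one-line deduction from Remark \ref{rem:gen} and Lemma \ref{lem:fin}; you have merely spelled out the details (the equivalence of dependency with $\rank A_{\sigma.\mathbb{T}}(\mathcal{A}_\infty)=2$, the union over index sets and good $6$-partitions, and the sign bookkeeping) that the paper leaves implicit.
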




\begin{thebibliography}{9}
  \bibitem{athana}  C. A. Athanasiadis. The Largest Intersection Lattice of a Discriminantal Arrangement, Beitr\"{a}ge  Algebra Geom., 40 (1999), no. 2, 283--289.
\bibitem{BaKe} A. Bachemand and W. Kern, Adjoints of oriented matroids, Combinatorica 6 (1986) 299--308.
   \bibitem{BB}M. Bayer and K.Brandt, Discriminantal arrangements, fiber polytopes and formality, J. Algebraic Combin. 6 (1997), 229--246.
   \bibitem{Crapo} H.Crapo, Concurrence geometries , Adv. in Math., 54 (1984), no. 3, 278--301. 
  \bibitem{falk} M. Falk, A note on discriminantal arrangements, Proc. Amer. Math. Soc., 122 (1994), no.4, 1221-1227   
  \bibitem{harris} Joe Harris, Algebraic Geometry: A First Course, Springer-Verlag 
  \bibitem{Kap}M. Kapranov, V.Voevodsky, Braided monoidal 2-categories and Manin-Schechtman higher braid groups, Journal of Pure and Applied Algebra, 92 (1994), no. 3,  241--267.
\bibitem{RL}R.J. Lawrence, A presentation for Manin and Schechtman's higher braid groups, MSRI pre-print (1991): http://www.ma.huji.ac.il/~ruthel/papers/premsh.html .
  \bibitem{sette} A. Libgober and S. Settepanella, Strata of discriminantal arrangements, arXiv:1601.06475  
  \bibitem{man} Yu. I. Manin and V. V. Schectman, Arrangements of Hyperplanes, Higher Braid Groups and Higher Bruhat Orders, Advanced Studies in Pure Mathematics 17, 1989 Algebraic Number Theory in honor K. Iwasawa pp. 289-308.
  \bibitem{Or} P. Orlik, Introduction to arrangements, CBMS Regional Conf. Ser. in Math., 72, Amer. Math. Soc., Providence, RI, (1989).
\bibitem{OT} P.Orlik,H.Terao, Arrangements of hyperplanes, Grundlehren der Mathematischen Wissenschaften [Fundamental Principles of Mathematical Sciences]." 300, Springer-Verlag, Berlin, (1992). 
\bibitem{Perling} M. Perling, Divisorial Cohomology Vanishing on Toric Varieties, Documenta Math. 16 (2011), 209--251. 
\end{thebibliography}
\end{document}